\definecolor{mygreen}{HTML}{278444}
\theoremstyle{plain}
\newtheorem{theorem}[equation]{Theorem} 
\newtheorem{proposition}[equation]{Proposition}
\newtheorem{lemma}[equation]{Lemma}
\newtheorem{corollary}[equation]{Corollary}
\theoremstyle{definition}
\newtheorem{definition}[equation]{Definition}
\newtheorem{remark}[equation]{Remark}
\newtheorem*{ack}{Acknowledgments}
\newcommand{\co}{\colon \thinspace}
\newcommand{\Q}{{\mathbb Q}}
\newcommand{\R}{{\mathbb R}}
\newcommand{\Z}{{\mathbb Z}}
\newcommand{\abs}[1]{\left\lvert {#1} \right\rvert}
\renewcommand{\leq}{\leqslant}
\renewcommand{\geq}{\geqslant}
\renewcommand{\epsilon}{\varepsilon}
\renewcommand{\phi}{\varphi}
\newcommand{\Ts}{{\mathscr T}}
\DeclareMathOperator{\scl}{scl}
\DeclareMathOperator{\interior}{int}
\DeclareMathOperator{\TP}{TP}
\DeclareMathOperator{\lcm}{lcm}
\newcommand{\sides}{\mathcal S}
\numberwithin{equation}{section}
  \def\tagform@#1{\maketag@@@{%
   \textbf{(\ignorespaces#1\unskip\@@italiccorr)}}}%
   \renewcommand{\eqref}[1]{\textup{\maketag@@@{(\ignorespaces%
        {\ref{#1}}\unskip\@@italiccorr)}}}
\definecolor{grn}{HTML}{37bc61}
\definecolor{purp}{HTML}{8a2be2}
\begin{document}

\title[Scl of non-filling curves]{On stable commutator length
  of non-filling curves in surfaces}

\author{Max Forester}
\address{Mathematics Department\\
        University of Oklahoma\\
        Norman, OK 73019\\
        USA}
\email{mf@ou.edu}
\author{Justin Malestein}
\address{Mathematics Department\\
        University of Oklahoma\\
        Norman, OK 73019\\
        USA}
\email{justin.malestein@ou.edu}

\maketitle

\begin{abstract}
We give a new proof of rationality of stable commutator length (scl) of
certain elements in surface groups: those represented by curves that
do not fill the surface. Such elements always admit extremal
surfaces for scl. These results also hold more generally for
non-filling $1$--chains. 
\end{abstract}

\section{Introduction}

This paper concerns the computation of stable commutator length (scl) in
fundamental groups of closed orientable surfaces. It is of
considerable interest to know whether scl is always rational in these
groups, or more generally, whether the scl norm is piecewise rational
linear (in the sense of \cite{Calegari1}). 

In more concrete terms, let $\Sigma$ be a closed orientable surface
and $\gamma \co S^1 \to \Sigma$ a null-homologous loop. We are
interested in identifying efficient maps of surfaces 
into $\Sigma$, with boundary mapping to a positive power of
$\gamma$. ``Efficient'' means that the ratio of the topological
complexity of the surface to the power of $\gamma$ on the boundary is
as small as possible. The infimal value of this ratio is the stable
commutator length of $\gamma$. In some cases, there may be a surface
realizing this infimal value; these are called \emph{extremal
surfaces} for scl, and when they exist scl is rational. 

Calegari \cite{Calegari3} has shown that one can always find an
\emph{immersed} surface with boundary a power of $\gamma$, and such
surfaces are the most efficient among those in their relative homology
classes. However there are infinitely many relative classes, and
one cannot draw any immediate conclusions about the existence of
an extremal surface for $\gamma$. 

In the present paper we give a new proof the following theorem of
Calegari, which establishes rationality in the special
case of curves $\gamma$ that do not fill $\Sigma$. It 
appears as Example 4.51 in \cite{sclbook}, as an application of 
Theorem 4.47. We state the result here for integral $1$--chains,
i.e. immersed multicurves. 

\begin{theorem}[Calegari] \label{mainthm}
Let $\Sigma$ be a closed orientable surface of genus $> 1$ and
$\gamma\co \coprod S^1 \to \Sigma$ a null-homologous integral
$1$-chain which does not fill $\Sigma$. Then the stable commutator
length of $\gamma$ is rational and computable. Moreover there exists
an extremal surface for $\gamma$.
\end{theorem}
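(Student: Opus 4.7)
The plan is to use the non-filling hypothesis to reduce the computation of $\scl$ on $\Sigma$ to an analogous computation in free groups, where rationality and extremality are known. First, because $\gamma$ does not fill $\Sigma$, the complement of $\gamma$ contains an essential simple closed curve, and more generally a pants decomposition of a maximal complementary subsurface. Cutting $\Sigma$ along a disjoint collection $c_1, \dots, c_m$ of such curves produces compact subsurfaces $\Sigma_1, \dots, \Sigma_k$ with non-empty boundary; each has free fundamental group, and the $1$--chain $\gamma$ is supported in the interiors of the pieces. Correspondingly, $\pi_1(\Sigma)$ is a graph of groups with free vertex groups $\pi_1(\Sigma_i)$ and infinite cyclic edge groups generated by the $c_j$.

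Next I would translate admissible surfaces in $\Sigma$ to admissible surfaces in the pieces. Given an admissible surface $S \to \Sigma$ with $\partial S \to n\gamma$, transversality allows us to cut $S$ along the preimage of $c_1 \cup \cdots \cup c_m$. Each resulting piece maps into some $\Sigma_i$ with boundary on $n\gamma$ together with components mapping to the $c_j$, and the multiplicities along each $c_j$ must balance between adjacent pieces because $\gamma$ is null-homologous. Standard surgery along the cutting preimage (compressing essential disks, discarding inessential annuli) can be applied to make the pieces incompressible without increasing $-\chi$. Conversely, any collection of admissible surfaces in the $\Sigma_i$ for $1$--chains of the form $\gamma_i + \sum_j t_{ij} c_j$, whose weights match along each $c_j$, can be glued back to an admissible surface in $\Sigma$. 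This identifies $\scl_{\pi_1(\Sigma)}(\gamma)$ with the optimum of a rational linear program whose variables are the edge weights $t_{ij}$ and whose objective is a sum of relative $\scl$ functionals on the free groups $\pi_1(\Sigma_i)$.

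To finish, I would invoke Calegari's rationality theorem for $\scl$ in free groups, which states that the relative $\scl$ functional on $B_1^H$ of a free group is rational piecewise linear and realized by immersed extremal surfaces, computable by a finite linear program. Combining these local linear programs with the finitely many balancing constraints on the $t_{ij}$ yields a global rational linear program; its optimum is rational and realized by a compatible family of extremal pieces in the $\Sigma_i$, whose gluing is an extremal surface for $\gamma$ in $\Sigma$.

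The main obstacle is the cut-and-surgery step: one must verify that any admissible surface in $\Sigma$ can, without loss of efficiency, be replaced by a decomposition into admissible pieces in the $\Sigma_i$. The essential geometric content is that the $c_j$ are incompressible in $\Sigma$ so that after surgery the cut surfaces inherit no bigons or compressions, and that the bookkeeping of multiplicities along each $c_j$ is consistent with the null-homology hypothesis. Once this reduction is established, the remaining steps—appealing to rationality of $\scl$ in free groups and assembling extremal pieces via linear programming—are largely formal.
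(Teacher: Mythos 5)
Your argument is essentially Calegari's original proof of this theorem (Theorem 4.47 and Example 4.51 of \cite{sclbook}), not the proof given in this paper: you realize $\pi_1(\Sigma)$ as a graph of groups with free vertex groups and cyclic edge groups by cutting along essential curves disjoint from $\gamma$, decompose admissible surfaces along the preimages of the cutting curves, and reduce $\scl(\gamma)$ to a linear program whose objective is assembled from the piecewise rational linear scl norms of the free vertex groups. The paper deliberately takes a different route: it keeps $\Sigma$ intact, puts admissible surfaces into a Gabai-style taut normal form relative to a handle decomposition $B \cup (L \cup M) \cup D$ adapted to $\gamma$, encodes taut surfaces by finite integer vectors (turn paths together with covering degrees over the non-filling subsurface $\Sigma_1$), and runs a single linear program directly; the non-trivial topology of $\Sigma_1$ is used to eliminate branch points by passing to finite covers. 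Your approach buys a clean reduction to the known free-group case; the paper's buys a self-contained argument that never invokes the graph-of-groups machinery or the free-group rationality theorem.

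Two places in your sketch carry most of the real content and are currently asserted rather than proved, and they are not where you locate the ``main obstacle'' (the cutting direction is the easy half). First, the gluing direction: matching the \emph{total} weights $t_{ij}$ over each $c_j$ does not suffice to reglue the pieces --- the multisets of degrees (with signs) of the individual boundary components lying over $c_j$ must agree on the two sides, and the extremal surfaces supplied by the free-group theorem need not satisfy this. This is repaired by taking multiple copies and passing to finite covers that make all boundary degrees over $c_j$ equal (compare Lemmas \ref{coverlem}--\ref{coverlem3}); moreover, since cut pieces can wrap around $c_j$ with either orientation, the edge variables must be split as $t_{ij}^{\pm}$, with the vertex chains taken to be $\gamma_i + \sum_j \bigl(t^+_{ij} c_j + t^-_{ij} c_j^{-1}\bigr)$. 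Second, the infimum over the edge weights ranges over an a priori unbounded rational polyhedron, so one needs properness of the objective --- e.g.\ via the triangle inequality for the scl norm on the relevant finite-dimensional rational subspace of $B_1^H$ of each vertex group --- to conclude that the optimum is attained at a rational vertex and realized by gluable extremal pieces. Both issues are resolved in Calegari's treatment, so the route is sound, but as written these steps are gaps in your proposal.
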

It follows immediately that \emph{rational} $1$--chains supported on
non-filling multicurves also have rational stable commutator
length. Indeed, we find that scl is piecewise rational linear on
``non-filling'' rational subspaces of the space of $1$--chains; that
is, subspaces spanned by rational $1$--chains $c_1, \dotsc, c_k$
such that the union of their supports is a non-filling multicurve. 

\subsection*{Other rationality results}
The list of groups known to have rational scl, excluding those with
$\scl \equiv 0$, is fairly short. Such groups include:
\begin{itemize}
\item free groups \cite{Calegari1}
\item fundamental groups of non-compact Seifert fibered $3$--manifolds
  \cite{Calegari1} 
\item free products of free abelian groups \cite{Calegari2}
\item free products of cyclic groups \cite{Walker}
\item amalgamated products of free abelian groups \cite{Susse}
\item Baumslag--Solitar groups, alternating elements only \cite{CFL}
\item free products of groups in which $\scl \equiv 0$ \cite{Chen1}
\item generalized Baumslag--Solitar groups \cite{Chen2}
\end{itemize}
Surface groups are conspicuously absent from this list, 
despite being among the simplest and most well-understood one-relator
groups. 

It is worth noting that finitely presented groups do not always
have rational scl; a counterexample was given by Zhuang in
\cite{Zhuang}. If one allows non-finitely presented groups, then in
fact every non-negative real number occurs as scl of some element of
a small cancellation group \cite{Heuer1}. 

\subsection*{Methods} 
The general outline of the argument is similar to the case of free
groups as presented in \cite{BCF}. The stable commutator length of
$\gamma$ is the infimum of $\frac{-\chi(S)}{2n(S)}$ over all
admissible surfaces $S$ mapping to $\Sigma$. First we show that
the infimum can be taken over the much smaller subset $\Ts(\gamma)$ of 
``taut'' admissible surfaces---these will be described below and
defined precisely in Section \ref{mappingsec}. 

Taut surfaces can be encoded as integer vectors via a map 
$v\co \Ts(\gamma) \to \R^k$, and the function
$\frac{-\chi(S)}{2n(S)}$ factors through this map as $\Ts(\gamma)
\overset{v}{\to} \R^k \to \R$ where the second map is a ratio
$A(v)/B(v)$ of linear functions. The computation of $\scl(\gamma)$ is
then reduced to a linear programming problem after showing that the
image of $v$ in $\R^k$ fills (in a suitable sense) the integer points
of a finite-sided rational polyhedron in $\R^k$. 

The two main steps are \emph{encoding}, which entails defining
$\Ts(\gamma)$ and the function $v$; and \emph{reassembly}, in which
specific vectors are shown to be in the image of $v$. Our main
assumption throughout the paper is that $\Sigma$ contains a subsurface
$\Sigma_1$ with non-trivial topology that is disjoint from the image
of $\gamma$. This property is needed for both steps. 

To make a surface $S$ taut, one needs to put it into a normal form
that can be encoded as a finite integer vector. Following Gabai
\cite{Gabai}, we arrange that away from a region containing the double
points of $\gamma$, the preimage in $S$ maps by a branched
immersion. We arrange further that the remaining parts of $S$ fall
into finitely many configurations, and all branch
points lie above $\Sigma_1$. The branch points are now the main
obstactle to having a finite encoding, but these can be removed by
passing to finite-sheeted covers, using the non-trivial topology of
$\Sigma_1$. 

For reassembly we show that every integer vector in the rational
polyhedron has a multiple that encodes a taut surface. When building
this surface out of pieces, a phenomenon involving branch points
arises. A taut surface cannot be built directly, but one can build 
one with branch points. The presence of branch points means that
the linear functional on $\R^k$ reporting $\chi(S)$ does not report
this number correctly. Once again, using finite coverings of
$\Sigma_1$, we are able to eliminate these branch points and construct
a taut surface with the desired encoding.

In contrast, Calegari's approach is quite different. 
He proves Theorem \ref{mainthm} by viewing surface groups as
fundamental groups of graphs of groups in which vertex groups are free
and edge groups are cyclic. If $\gamma$ is non-filling then it avoids
an essential annulus, and one may encode $\Sigma$ as a graph of groups
with one edge. The proof then proceeds by reducing the computation of
$\scl$ to computing $\scl$ in each vertex group, using the assumption
that $\gamma$ is supported in the vertex groups. 

\begin{ack}
The authors were partially supported by Simons Foundation awards
\#638026 (MF) and \#713006 (JM). They thank the referee for making
helpful suggestions. 
\end{ack}

\section{Preliminaries}

\subsection*{Stable commutator length}
We start with basic working definitions for stable commutator length
of group elements and of $1$--chains. See \cite[Chapter 2]{sclbook} for
more information. 

\begin{definition}[scl of integral $1$--chains]
Let $X$ be a space with fundamental group $G$. An \emph{integral
$1$--chain over $G$} is a finite formal sum $c = \sum_i g_i$ with $g_i \in
G$. Let $\gamma_i \co S^1 \to X$ be a loop representing $g_i$ for each
$i$ and let $\gamma \co \coprod_i S^1 \to X$ be the map given by
$\coprod_i \gamma_i$. An \emph{admissible surface for $\gamma$} (or
\emph{for $c$}) is a
compact oriented surface $S$ together with a map $f \co S \to X$ such
that 
\begin{itemize}
\item $\partial S \not= \emptyset$ and $S$ has no sphere or disk components
\item the restriction $f \vert_{\partial S}$ factors through $\gamma$;
that is, there is a commutative diagram
\[
\begin{tikzcd}
\partial S \arrow[hook]{r} \arrow{d} & S \arrow{d}{f} \\
\coprod_i S^1 \arrow{r}{\coprod_i \gamma_i \ } & X
\end{tikzcd}
\]
\item the restriction of the map $\partial S \to \coprod_i S^1$ to each
connected  component of $\partial S$ is a map of \emph{positive}
degree
\item there is a positive integer $n(S)$ such that for each component
  of $\coprod_i S^1$, the preimage in $\partial S$ maps to it by total
  degree $n(S)$. 
\end{itemize}
The \emph{stable commutator length (scl) of $c$} is
defined by
\[ \scl(c) \ = \ \inf_{S} \frac{-\chi(S)}{2n(S)}\]
where the infimum is taken over all admissible surfaces for
$\gamma$. If no admissible surface exists, we define $\scl(c) =
\infty$. 

Note: the third bullet is not included in Calegari's definition of
admissible surface in \cite{sclbook}. However, Proposition 2.13 of
\cite{sclbook} shows that including it does not change the meaning of
scl. 

Using homogeneity properties of $\scl$, the definition extends to
rational chains. If $c = \sum_i c_i g_i$ with $c_i \in \Q$ then $mc$
is integral for some $m$, and we may define $\scl(c) =
\frac{1}{m}\scl(mc)$. Extending by continuity, it is also defined for
real chains. See \cite[Section 2.6]{sclbook}. 
\end{definition}

\begin{definition}[scl of group elements] 
For any $g\in G$ we define $\scl(g)$ by considering $g$ as an integral
$1$--chain consisting of one element. The above definition, in this
case, specializes to the standard definition of $\scl(g)$.
Note that admissible surfaces for $g$ exist if and only if $g^k \in
[G,G]$ for some $k\not= 0$, and so $\scl(g)$ is finite exactly when
the latter occurs. 
\end{definition}

\begin{definition}
Let $c$ be an integral $1$--chain over $G$. An \emph{extremal surface
for $c$} is an admissible surface $S$ for $c$ that realizes the
infimum in the definition of $\scl(c)$. If $c$
is a rational $1$--chain, an extremal surface for $c$ is just an
extremal surface for any $mc$ that is integral. Extremal surfaces need not
exist, but when they do, $\scl(c)$ must of course be rational. 
\end{definition}

Let $C_1(G)$ be the space of $1$--chains, which is a
vector space over $\R$ with basis $G$. Let $B_1(G) \subset C_1(G)$ be
the subspace of boundaries of $2$--chains, i.e. the kernel of the
quotient map $C_1(G) \to H_1(G;\R)$. Let $B_1^H(G)$ be the quotient of
$B_1(G)$ by the subspace $H$ spanned by elements of the form $g^n -
ng$ and $g - hgh^{-1}$ for all $g,h \in G$, $n \in \Z$. Then the
function $\scl$ is a pseudo-norm on $B_1^H(G)$. Moreover, whenever $G$
is hyperbolic, $\scl$ is a geniune norm on $B_1^H(G)$
\cite{CF}.

\subsection*{Surfaces}
Let $\Sigma$ be a closed surface. We say that a multicurve $\gamma \co
\coprod S^1 \to \Sigma$ is \emph{non-filling} if it is homotopic to an
immersion in general position such that some component of the
complement of the image is not a disk (or equivalently, $\Sigma$
contains an essential simple closed curve disjoint from the image of
$\gamma$). Correspondingly, an integral $1$--chain or group element in
$G = \pi_1(\Sigma)$ is \emph{non-filling} if it is represented by a
non-filling multicurve. 

\begin{definition}
A map $f \co S \to \Sigma$ of orientable surfaces is \emph{compressible}
if there is an essential simple closed curve $C \subset S$ such that
$f(C)$ is nullhomotopic in $\Sigma$. If $f$ is not compressible it is
called \emph{incompressible}. 

When $f$ is compressible, one can replace an annular neighborhood of
$C$ by two disks and extend $f$ over these disks, thus obtaining $f'
\co S' \to \Sigma$ of smaller complexity.
\end{definition}

\subsection*{Coverings} 
Throughout this paper curves and surfaces typically have
orientations, and by the \emph{degree} of a map we always mean the
homological degree. 

Suppose $p \co X \to Y$ is a finite-sheeted covering of oriented
manifolds. Partition the connected components of $X$ as $X^+
\sqcup X^-$ such that $p$ is orientation-preserving on $X^+$ and
orientation-reversing on $X^-$. We define the \emph{positive degree}
of $p$ to be the number of sheets of $X^+$ and the \emph{negative
degree} of $p$ to be the number of sheets of $X^-$. Thus the sum of
these numbers is the total number of sheets of $p$ and their
difference is the homological degree. 

Next we record some basic covering lemmas for surfaces. 

\begin{lemma}\label{coverlem} 
Let $S$ be a compact connected oriented surface having $p\geq 1$ boundary
components, with $S \not\cong D^2$. 
\begin{enumerate}
\item \label{cover1} For any $q<p$ boundary components $C_1, \dotsc,
C_{q}$ of $S$ and integers $n_1, \dotsc, n_{q} \geq 1$, there is a connected
regular finite-sheeted cover $S' \to S$ such that every component of
$\partial  S'$ mapping to $C_i$ covers with degree $n_i$. 
\item \label{cover2} For any integer $n \geq 1$ there is a connected
finite-sheeted cover $S' \to S$ such that every component of $\partial
S'$ covers its image with degree $n$. 
\end{enumerate}
\end{lemma}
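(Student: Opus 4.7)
The plan is to realize both covers via surjective homomorphisms $\phi \co \pi_1(S) \to G$ to finite groups $G$; the resulting regular cover $S' \to S$ corresponding to $\ker \phi$ is automatically connected and finite-sheeted, and each component of $\partial S'$ lying over a boundary component $C \subset \partial S$ covers $C$ with degree equal to the order of $\phi([C])$ in $G$.

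Since $S$ has nonempty boundary and $S \neq D^2$, $\pi_1(S)$ is free of rank $2g+p-1 \geq 1$. A standard handle decomposition (equivalently, a spanning tree argument in the retraction of $S$ to a graph) provides a free basis of the form $a_1,b_1,\ldots,a_g,b_g,c_1,\ldots,c_{p-1}$ in which $c_i$ represents the boundary component $C_i$, and the remaining boundary loop satisfies $c_p = \prod_j[a_j,b_j]\cdot c_1\cdots c_{p-1}$ up to conjugation. For Part (1), since $q<p$, the loops $c_1,\ldots,c_q$ form part of a free basis, so I define $\phi$ to $G = \prod_{i=1}^q \Z/n_i$ by sending $c_i$ to the generator of the $i$-th factor and every other basis element to $0$. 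Then $\phi$ is surjective and each $\phi(c_i)$ has order exactly $n_i$, yielding the required cover.

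For Part (2), I split into cases. If $p\geq 2$, apply Part (1) with $q=p-1$ and all $n_i=n$; since $[c_p]=-\sum_{i<p}[c_i]$ in the abelianization (the boundary is null-homologous in $S$), the image $\phi(c_p)=-(1,\ldots,1)\in(\Z/n)^{p-1}$ also has order $n$, so components over $C_p$ cover with degree $n$ as well. If $p=1$ (so $g \geq 1$), abelian quotients fail since $c\in[\pi_1(S),\pi_1(S)]$; instead I use the Heisenberg group $H=\mathrm{UT}_3(\Z/n)$ of upper unitriangular matrices over $\Z/n$ and send $a_1\mapsto I+e_{12}$, $b_1\mapsto I+e_{23}$ with all other generators going to $I$, so that $\phi(c)=[\phi(a_1),\phi(b_1)]=I+e_{13}$ is a central element of order $n$.

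The main obstacle is the $p=1$ case of Part (2): abelian techniques are insufficient, and one must produce a finite non-abelian quotient in which the single boundary loop realizes the prescribed order. The Heisenberg construction handles this directly; alternatively one could first pass to an abelian double cover of $S$ (which will have two boundary components, since $c$ is trivial in every abelian quotient) and then invoke the $p\geq 2$ case.
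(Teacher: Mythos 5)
Your proof is correct. Part (1) and the $p \geq 2$ case of part (2) are essentially the paper's argument: both realize the cover via a finite abelian quotient of $H_1(S)$ in which the chosen boundary classes (which form part of a basis since $q < p$) are sent to elements of the prescribed orders, and both exploit the relation $[C_p] = -\sum_{i<p}[C_i]$ to handle the last boundary component; the only cosmetic difference is that you use $\prod_i \Z/n_i\Z$ where the paper uses a single cyclic group $\Z/N\Z$ with $N = \lcm(n_i)$. The genuine divergence is the $p=1$ case of part (2), where the boundary class dies in every abelian quotient. The paper sidesteps this by first passing to any connected double cover $S'' \to S$, observing that $\chi(S'') = 2\chi(S)$ is even forces $S''$ to have two boundary components each of degree $1$, and then applying the $p \geq 2$ case to $S''$. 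You instead produce the cover in one step from a nilpotent quotient: mapping $\pi_1(S)$ onto the Heisenberg group $\mathrm{UT}_3(\Z/n\Z)$ so that the boundary word $\prod_j[a_j,b_j]$ lands on the central element $I+e_{13}$ of order exactly $n$. Both are valid; your route gives an explicit regular cover of degree $n^3$ directly over $S$, while the paper's two-step composition is slightly more elementary (purely homological) at the cost of the resulting cover not being exhibited as regular over $S$ — which is harmless, since regularity is not claimed in part (2). You also correctly note the paper's double-cover trick as an alternative.
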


\begin{proof}
For \eqref{cover1}, let $x_i \in H_1(S)$ be the element represented by
$C_i$, for $1 \leq i \leq q$. This set extends to a basis $\{x_1,
\dotsc, x_n\}$ of $H_1(S)$, because $q<p$. Let $N$ be the least common
multiple of the numbers $n_i$. Let $\phi\co H_1(S) \to \Z/N\Z$ send
$x_i$ to $N/n_i$ for $1 \leq i \leq q$ (the values on the other basis
elements are irrelevant). Now $S'\to S$ is the cover corresponding to
the kernel of the composition $\pi_1(S) \to H_1(S) \to \Z/N\Z$. Since
$\phi(x_i)$ has order $n_i$ in $\Z/N\Z$ and the cover is regular, the
boundary $\partial S'$ behaves as desired.

For \eqref{cover2}, suppose first that $p > 1$. Choose boundary
components $C_1, \dotsc, C_{p-1}$ of $\partial S$ and a basis
$\{x_i\}$ of $H_1(S)$ as before. Let $\phi \co H_1(S) \to
(\Z/n\Z)^{p-1}$ send $x_i$ to the $i$th generator $(0, \dotsc, 1,
\dotsc, 0)$ for $1 \leq i \leq p-1$. Then $\phi$ sends the class
represented by the last boundary component of $S$ to $(-1, \dotsc,
-1)$. Since every boundary component maps to an element of order $n$,
the cover $S' \to S$ corresponding to the kernel of $\pi_1(S) \to
H_1(S) \to (\Z/n\Z)^{p-1}$ has the desired property. 

If $\partial S$ has one component, first let $S'' \to S$ be any
connected $2$--sheeted cover (which exists since $S \not\cong
D^2$). It will have $2$ boundary components, each mapping by degree
$1$, because $\chi(S'')$ is even. Apply the preceding case to get
$S' \to S''$, and take the composition $S' \to S'' \to S$. 
\end{proof}

It will be convenient to have a version of this lemma for 
disconnected surfaces. By a finite-sheeted cover of a disconnected
surface, we mean a cover for which the number of sheets is the same
over every component.  

\begin{lemma}\label{coverlem2} 
Let $S$ be a compact oriented surface where each component has at
least $1$ boundary component and no component is homeomorphic to
$D^2$.  For any set of boundary components $C_1, \dotsc, C_{q}$ of $S$
which does not contain all the boundary components of any component of
$S$ and integers $n_1, \dotsc, n_{q} \geq 1$, there is a
finite-sheeted cover $S' \to S$ such that every component of $\partial
S'$ mapping to $C_i$ covers with degree $n_i$.
\end{lemma}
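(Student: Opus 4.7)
The plan is to reduce to the connected case of Lemma \ref{coverlem} by working on each component of $S$ separately, and then to equalize the number of sheets across the different covers by taking disjoint unions of copies.

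More precisely, let $S_1, \dotsc, S_r$ be the connected components of $S$. For each $i$, let $Q_i = \{C_j : C_j \subset S_i\}$ be the set of distinguished boundary components lying in $S_i$. By hypothesis $Q_i$ does not exhaust $\partial S_i$, so if $p_i$ denotes the number of boundary components of $S_i$ and $q_i = |Q_i|$, we have $q_i < p_i$. If $Q_i$ is nonempty, apply Lemma \ref{coverlem}\eqref{cover1} to $S_i$ (using the hypothesis $S_i \not\cong D^2$) to obtain a connected regular finite-sheeted cover $p_i \co S_i' \to S_i$ of some degree $d_i$ such that each component of $\partial S_i'$ mapping to $C_j \in Q_i$ covers with degree $n_j$. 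If $Q_i$ is empty, simply set $S_i' = S_i$ and $d_i = 1$.

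Now let $N = \lcm(d_1, \dotsc, d_r)$, and let $S_i''$ be the disjoint union of $N/d_i$ copies of $S_i'$, which is an $N$-sheeted cover of $S_i$ with the same boundary behavior as $S_i' \to S_i$ on each copy. Setting $S' = \coprod_i S_i''$ gives a cover of $S$ that has $N$ sheets over every component of $S$, hence qualifies as a finite-sheeted cover in the sense defined just before the lemma. By construction, every component of $\partial S'$ mapping to $C_i$ covers with degree $n_i$, as required.

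The only thing to be careful about is the definitional point that a cover of a disconnected surface must have the same number of sheets over every component; this is exactly what the $\lcm$ step arranges, and it is the reason we need the connected-case lemma rather than simply asserting the disconnected case directly. Since Lemma \ref{coverlem}\eqref{cover1} is already in hand, no genuine obstacle remains.
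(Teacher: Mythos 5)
Your proof is correct and follows essentially the same route as the paper: apply Lemma \ref{coverlem}\eqref{cover1} componentwise and then take $N/d_i$ copies of each cover, where $N$ is the least common multiple of the degrees, to equalize the number of sheets. The only additions are the explicit handling of components with no distinguished boundary curves and the remark about the definition of a cover of a disconnected surface, both of which are harmless elaborations of what the paper leaves implicit.
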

\begin{proof}
Let $S_1, \dots, S_m$ be the components of $S$. We first apply Lemma
\ref{coverlem} \eqref{cover1} to each $S_i$ to obtain an $N_i$-fold
cover $S_i' \to S_i$ with the correct degree on boundary
components. Let $N = \lcm(N_1, \dots, N_m)$. Then the cover of $S$
consisting of $N/N_i$ copies of $S_i$ has the desired properties. 
\end{proof}

We are also interested in the case where we allow branched covers with
prescribed degrees on the boundaries. The flexibility of degrees on
boundaries is much larger for branched covers. 
\begin{lemma}\label{coverlem3}
Let $S$ be a compact connected oriented surface having $p\geq 1$
boundary components. Let $C_1, \dots, C_p$ be the boundary components
of $S$, let $n$ be a natural number, and let $\lambda_1, \dots,
\lambda_p$ be partitions of $n$. Then there is an $n$-fold branched
cover $S' \to S$ such that the degrees of the components over $C_i$
are precisely the values in the partition $\lambda_i$. 
\end{lemma}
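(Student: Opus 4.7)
The plan is to realize $S' \to S$ as an $n$--sheeted covering of $S$ branched over (at most) a single interior point $p_* \in \interior(S)$, by prescribing its monodromy. Because $S$ has nonempty boundary, the fundamental group $\pi_1(S \setminus \{p_*\})$ is free on the standard surface generators $a_1, b_1, \dotsc, a_g, b_g, c_1, \dotsc, c_p$, where $g$ is the genus of $S$ and $c_j$ is a loop around $C_j$. Letting $\mu$ denote the class of a small loop around $p_*$, the punctured surface relation reads
\[ \mu \ = \ c_p^{-1} \dotsm c_1^{-1} [a_g, b_g]^{-1} \dotsm [a_1, b_1]^{-1}. \]
The standard monodromy dictionary identifies $n$--sheeted branched covers of $S$ with branch locus contained in $\{p_*\}$ with conjugacy classes of homomorphisms $\rho \co \pi_1(S \setminus \{p_*\}) \to \mathfrak{S}_n$, where $\mathfrak{S}_n$ is the symmetric group on $n$ letters; moreover, the components of $\partial S'$ lying over $C_j$ correspond bijectively to the cycles of $\rho(c_j)$, each covering $C_j$ with degree equal to the length of its cycle.

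With this dictionary in hand the construction is immediate. For each $j$, choose $\sigma_j \in \mathfrak{S}_n$ whose cycle lengths are precisely the parts of $\lambda_j$, and define $\rho$ on the free basis by $\rho(a_i) = \rho(b_i) = \id$ and $\rho(c_j) = \sigma_j$; no consistency check is required since the chosen generators are free. The resulting unbranched $n$--sheeted cover of $S \setminus \{p_*\}$ will then be completed uniquely to a branched cover $S' \to S$ by adding one interior point above $p_*$ for each cycle of $\rho(\mu)$, with a cycle of length $\ell$ contributing a disk mapping to a neighborhood of $p_*$ by a degree--$\ell$ cyclic branched cover. By construction, the components of $\partial S'$ over $C_j$ have degrees equal to the parts of $\lambda_j$, as desired.

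I do not expect any real obstacle. The only things to verify are the standard monodromy-to-cover correspondence and the uniqueness of the extension across the single branch point, both routine. The key reason no compatibility among the $\lambda_j$'s is needed is precisely that $S$ has nonempty boundary: puncturing the interior leaves a free fundamental group in which the $\sigma_j$'s can be assigned independently, and the induced value of $\rho(\mu)$ merely records the (inessential) branching at $p_*$.
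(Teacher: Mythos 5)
Your argument is correct, and it arrives at the same kind of object as the paper by a genuinely different route. The paper's proof is an explicit surgery: it starts with $n$ disjoint copies of $S$ and repeatedly cuts two sheets along parallel copies of an arc running from a point of $C_i$ into the interior, cross-gluing so as to merge two boundary components over $C_i$ into one of twice the total degree, at the cost of a simple branch point at the interior endpoint of the arc; iterating such moves realizes an arbitrary partition $\lambda_i$ over each $C_i$ independently. Your monodromy formulation packages the same freedom algebraically: since $\pi_1(S\setminus\{p_*\})$ is free on $a_1,\dotsc,b_g,c_1,\dotsc,c_p$, the permutations $\sigma_j$ may be assigned independently with no relation to verify, and the branching at $p_*$ (recorded by $\rho(\mu)$) absorbs whatever is forced; in effect the paper's successive surgeries build each $\sigma_j$ as a product of transpositions one step at a time. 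Your version is less hands-on but makes completely transparent \emph{why} no compatibility condition among the $\lambda_j$ is needed, and it localizes all branching at a single point; the paper's version avoids invoking the covering-space/monodromy dictionary and the extension of covers across branch points, constructing $S'$ by bare hands. Both are complete proofs. (Note that, as the paper remarks after the statement, $S'$ need not be connected; your choice $\rho(a_i)=\rho(b_i)=\id$ will typically yield a disconnected cover, which is permitted, and if connectedness were ever wanted one could modify $\rho$ on the $a_i$ to make the image transitive.)
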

Note that $S'$ is not necessarily connected. 
\begin{proof}
We start with the cover $S'' \to S$ where $S''$ is $n$ copies of
$S$. Let $A, B$ be distinct copies of $C_i$ in $S''$. Let $\alpha,
\beta$ be copies of the same small arc from a point on $C_i$ into the
interior of $S$. Cut $S''$ along $\alpha$ and $\beta$ and glue the
right side of $\beta$ to the left side of $\alpha$ and vice
versa. This new surface, in a canonical way, admits a branched cover
to $S$ where the degrees over $C_i$ are $2, 1, 1, \dots, 1$. By using
this kind of surgery to connect boundary components of $S''$, it is
clear that we can construct a branched cover $S' \to S$ where the
degrees over $C_i$ are an arbitrary partition of $n$. 
\end{proof}

\section{Mapping surfaces to surfaces}\label{mappingsec}

\subsection*{Setup}
Let $\Sigma$ be a closed oriented surface of genus greater than one. 
Let $\gamma \co \coprod S^1 \to \Sigma$ be a null-homologous
$1$--chain that does not fill $\Sigma$. We can put $\gamma$ into
general position (smoothly immersed, transverse intersections, no
triple points) such that at least one complementary component is not a
disk. 

Let $\Sigma_0$ be one such complementary component.
Let $B\subset \Sigma_0$ be an embedded closed disk and let $\lambda_1,
\dotsc, \lambda_k$ be disjoint properly embedded arcs in $\Sigma_0 -
\interior(B)$ with endpoints on $\partial B$ such that the inclusion
of $B \cup \bigcup_i \lambda_i$ 
into $\Sigma_0$ is a homotopy equivalence. Let $L \subset (\Sigma_0 -
\interior(B))$ be the closure of a tubular neighborhood of $\bigcup_i
\lambda_i$, so that $\Sigma_1 = B \cup L$ is compact subsurface
homotopy equivalent to $\Sigma_0$. 

Next let $\mu_1, \dotsc, \mu_{\ell}$ be disjoint properly embedded
arcs in $\Sigma - \interior(\Sigma_1)$ with endpoints on
$\partial B - L$ such that the $\mu_i$ cut each component
of $\Sigma - \Sigma_1$ into a single disk and the $\mu_i$ do not cross
any double points of $\gamma$. We also require
that the arcs $\mu_i$ are transverse to the immersed submanifold
$\gamma(\coprod S^1)$. Let $M \subset
(\Sigma - \interior(\Sigma_1))$ be the closure of a tubular 
neighborhood of $\bigcup_i \mu_i$, disjoint from $L$, so that
$\Sigma_2 = \Sigma_1 \cup M$ contains no double point of $\gamma$
and $\gamma(\coprod S^1) \cap M$ consists of fibers of $M$. 
Let $D$ be the
closure of $\Sigma - \Sigma_2$; this is a disjoint union of
embedded closed disks. Finally, define $\Sigma_3 = \Sigma -
\interior(\Sigma_1) = D \cup M$. 

Each component of $\coprod S^1$ maps by $\gamma$ to a path which
alternates between embedded oriented arcs in $D$, called \emph{turn
arcs}, and arcs crossing $M$. Let $\tau_1, \dotsc, \tau_m$ be the
turn arcs. See Figure \ref{sigmapic}. 

\begin{figure}[!ht]
\begin{tikzpicture}[line width = 0.3mm, scale = 0.8]

\begin{scope}[xscale=-1]
\draw (-6.7, 1) circle (1.2cm);
\draw (-6.7, 1) circle (0.8cm);
\fill[white, even odd rule] (-6, 1.2) circle[radius=1.2cm] circle[radius=0.8cm];
\draw (-6, 1.2) circle (1.2cm);
\draw (-6, 1.2) circle (0.8cm);
\draw (0, 2) circle (1.2cm);
\draw (0, 2) circle (0.8cm);
\fill[gray!20!white, even odd rule] (1, 2) circle[radius=1.2cm] circle[radius=0.8cm];
\draw[red] (1, 2) circle (1.2cm);
\draw[red] (1, 2) circle (0.8cm);
\draw (6, 1.2) circle (1.2cm);
\draw (6, 1.2) circle (0.8cm);
\fill[white, even odd rule] (6.7, 1) circle[radius=1.2cm] circle[radius=0.8cm];
\draw (6.7, 1) circle (1.2cm);
\draw (6.7, 1) circle (0.8cm);
\fill[gray!20!white, even odd rule] (-6, 0) circle[radius=3.4cm] circle[radius=3cm];
\draw[red] (-6, 0) circle (3.4cm);
\draw[red] (-6, 0) circle (3cm);

\filldraw[fill=gray!20!white, draw = red] (0, 0) ellipse (8cm and 2cm);

\begin{scope}[decoration = {markings, mark = at position 0.6 with  {\arrow{stealth}}}]
\draw[red, postaction={decorate}] (-45:8cm and 2cm) arc(-45:-35:8cm and 2cm); 
\draw[red, shift={(-6,0)}, postaction={decorate}] (210:3cm) arc(210:202:3cm); 
\draw[red, shift={(1,2)}, postaction={decorate}] (80:1.2cm) arc(80:120:1.2cm); 
\draw[red, shift={(1,2)}, postaction={decorate}] (70:0.8cm) arc(70:50:0.8cm); 
\end{scope}

\draw[black, line width= 0.32mm] (243.4:8cm and 2cm) arc(243.4:246.6:8cm and 2cm);
\draw[black, line width= 0.32mm] (158.3:8cm and 2cm) arc(158.3:166.7:8cm and 2cm);
\draw[black, line width= 0.32mm] (147.8:8cm and 2cm) arc(147.8:153.7:8cm and 2cm);
\draw[black, line width= 0.32mm] (134.1:8cm and 2cm) arc(134.1:138.5:8cm and 2cm);
\draw[black, line width= 0.32mm] (127.3:8cm and 2cm) arc(127.3:131.32:8cm and 2cm);
\draw[black, line width= 0.32mm] (113.4:8cm and 2cm) arc(113.4:116.6:8cm and 2cm);
\draw[black, line width= 0.32mm] (95.62:8cm and 2cm) arc(95.62:98.75:8cm and 2cm);
\draw[black, line width= 0.32mm] (88.7:8cm and 2cm) arc(88.7:91.3:8cm and 2cm);
\draw[black, line width= 0.32mm] (81.24:8cm and 2cm) arc(81.24:84.37:8cm and 2cm);
\draw[black, line width= 0.32mm] (74.2:8cm and 2cm) arc(74.2:76.85:8cm and 2cm);
\draw[black, line width= 0.32mm] (48.68:8cm and 2cm) arc(48.68:52.7:8cm and 2cm);
\draw[black, line width= 0.32mm] (41.5:8cm and 2cm) arc(41.5:45.9:8cm and 2cm);
\draw[black, line width= 0.32mm] (26.3:8cm and 2cm) arc(26.3:32.2:8cm and 2cm);
\draw[black, line width= 0.32mm] (13.3:8cm and 2cm) arc(13.3:21.7:8cm and 2cm);

\draw (0, 0) node{$B$};
\draw (-45:8.3cm and 2.3cm) node{\Small $\alpha_1$ \normalsize};
\draw (46:7.7cm and 1.7cm) node{\tiny $\alpha_2$ \normalsize};
\draw (35:7.7cm and 1.7cm) node{\tiny $\alpha_3$ \normalsize};
\draw (21.5:7.7cm and 1.7cm) node{\tiny $\alpha_4$ \normalsize};
\draw[shift = {(1,2)}] (85:1.45cm) node{\Small $\alpha_5$ \normalsize};
\draw[shift = {(1,2)}] (40:0.55cm) node{\tiny $\alpha_6$ \normalsize};
\draw[shift = {(-6,0)}] (215:2.7cm) node{\Small $\alpha_7$ \normalsize};
\draw (158:7.7cm and 1.7cm) node{\tiny $\alpha_8$ \normalsize};
\draw (144:7.7cm and 1.7cm) node{\tiny $\alpha_9$ \normalsize};
\draw (133:7.7cm and 1.7cm) node{\tiny $\alpha_{10}$ \normalsize};

\begin{scope}[decoration = {markings, mark = at position 0.8 with  {\arrow{stealth}}}]
\draw[blue, shift = {(-6.7, 1)}, postaction={decorate}] (170:0.8cm) -- (170:1.2cm);
\draw[blue, shift = {(-6.7, 1)}, postaction={decorate}] (150:1.2cm) -- (150:0.8cm);
\draw[blue, shift = {(-6, 1.2)}, postaction={decorate}] (122:0.8cm) -- (122:1.2cm);
\draw[blue, shift = {(-6, 1.2)}, postaction={decorate}] (102:1.2cm) -- (102:0.8cm);
\draw[blue, shift = {(0, 2)}, postaction={decorate}] (160:1.2cm) -- (160:0.8cm);
\draw[blue, shift = {(0, 2)}, postaction={decorate}] (140:0.8cm) -- (140:1.2cm);
\draw[blue, shift = {(0, 2)}, postaction={decorate}] (120:1.2cm) -- (120:0.8cm);
\draw[blue, shift = {(0, 2)}, postaction={decorate}] (100:0.8cm) -- (100:1.2cm);
\draw[blue, shift = {(6, 1.2)}, postaction={decorate}] (120:1.2cm) -- (120:0.8cm);
\draw[blue, shift = {(6, 1.2)}, postaction={decorate}] (100:0.8cm) -- (100:1.2cm);
\draw[blue, shift = {(6.7, 1)}, postaction={decorate}] (85:0.8cm) -- (85:1.2cm);
\draw[blue, shift = {(6.7, 1)}, postaction={decorate}] (65:1.2cm) -- (65:0.8cm);
\end{scope}

\end{scope}

\draw (-4, -7) circle (2.5cm);
\draw (4, -7) circle (2.5cm);
\draw (-4, -10.5) node{$D_1$};
\draw (4, -10.5) node{$D_2$};

\begin{scope}[decoration = {markings, mark = at position 0.6 with  {\arrow{stealth}}}]
\draw[shift={(-4,-7)}, red, line width = 0.5mm, postaction={decorate}] (-15:2.5cm) arc(-15:15:2.5cm) node[anchor = west, pos = 0.5, black]{$\alpha_1$};
\draw[shift={(-4,-7)}, red, line width = 0.5mm, postaction={decorate}] (45:2.5cm) arc(45:75:2.5cm) node[anchor = 240, pos = 0.5, black]{$\alpha_2$};
\draw[shift={(-4,-7)}, red, line width = 0.5mm, postaction={decorate}] (105:2.5cm) arc(105:135:2.5cm)  node[anchor = 300, pos = 0.5, black]{$\alpha_3$};
\draw[shift={(-4,-7)}, red, line width = 0.5mm, postaction={decorate}] (165:2.5cm) arc(165:195:2.5cm) node[anchor = east, pos = 0.5, black]{$\alpha_4$};
\draw[shift={(-4,-7)}, red, line width = 0.5mm, postaction={decorate}] (225:2.5cm) arc(225:255:2.5cm) node[anchor = 60, pos = 0.5, black]{$\alpha_5$};
\draw[shift={(-4,-7)}, red, line width = 0.5mm, postaction={decorate}] (285:2.5cm) arc(285:315:2.5cm) node[anchor = 120, pos = 0.5, black]{$\alpha_6$};

\draw[shift={(4,-7)}, red, line width = 0.5mm, postaction={decorate}] (-22.5:2.5cm) arc(-22.5:22.5:2.5cm) node[anchor = 180, pos = 0.5, black]{$\alpha_7$};
\draw[shift={(4,-7)}, red, line width = 0.5mm, postaction={decorate}] (67.5:2.5cm) arc(67.5:112.5:2.5cm) node[anchor = 270, pos = 0.5, black]{$\alpha_ 8$};
\draw[shift={(4,-7)}, red, line width = 0.5mm, postaction={decorate}] (157.5:2.5cm) arc(157.5:202.5:2.5cm) node[anchor = 0, pos = 0.5, black]{$\alpha_9$};
\draw[shift={(4,-7)}, red, line width = 0.5mm, postaction={decorate}] (247.5:2.5cm)
arc(247.5:292.5:2.5cm) node[anchor = 90, pos = 0.5,
black]{$\alpha_{10}$};
\end{scope}

\begin{scope}[decoration = {markings, mark = at position 0.15 with  {\arrow{stealth}}}]
\draw[shift={(-4, -7)}, blue, line width = 0.4mm, postaction={decorate}] (321:2.5cm) to[out=140, in = 275] (95:2.5cm);
\draw[shift={(-4, -7)}, blue, line width = 0.4mm, postaction={decorate}] (205:2.5cm) to[out=385, in = 507] (327:2.5cm);
\draw[shift={(-4, -7)}, blue, line width = 0.4mm, postaction={decorate}] (333:2.5cm) to[out=513, in = 325] (145:2.5cm);
\draw[shift={(-4, -7)}, blue, line width = 0.4mm, postaction={decorate}] (85:2.5cm) to[out=265, in = 519] (339:2.5cm);
\draw[shift={(-4, -7)}, blue, line width = 0.4mm, postaction={decorate}] (261:2.5cm) to[out=441, in = 205] (25:2.5cm);
\draw[shift={(-4, -7)}, blue, line width = 0.4mm, postaction={decorate}] (35:2.5cm) to[out=215, in = 459] (279:2.5cm);
\draw[shift={(-4, -7)}, blue, line width = 0.4mm, postaction={decorate}] (155:2.5cm) to[out=335, in = 447] (267:2.5cm);
\draw[shift={(-4, -7)}, blue, line width = 0.4mm, postaction={decorate}] (273:2.5cm) to[out=453, in = 395] (215:2.5cm);

\draw[shift={(4, -7)}, blue, line width = 0.4mm, postaction={decorate}] (232.5:2.5cm) to[out=412.5, in = 217.5] (37.5:2.5cm);
\draw[shift={(4, -7)}, blue, line width = 0.4mm, postaction={decorate}] (52.5:2.5cm) to[out=232.5, in = 307.5] (127.5:2.5cm);
\draw[shift={(4, -7)}, blue, line width = 0.4mm, postaction={decorate}] (142.5:2.5cm) to[out=322.5, in = 487.5] (307.5:2.5cm);
\draw[shift={(4, -7)}, blue, line width = 0.4mm, postaction={decorate}] (322.5:2.5cm) to[out=502.5, in = 397.5] (217.5:2.5cm);
\end{scope}
\end{tikzpicture}

\caption{A surface $\Sigma$ and its handle decomposition $\Sigma = B
  \cup (L \cup M) \cup D$. The shaded part is the subsurface $\Sigma_1
  = B \cup L$. The null-homologous $1$--cycle $\gamma$ is shown in
  blue. All self-intersections of $\gamma$ lie inside $D = D_1 \cup
  D_2$. Outside of $D$, $\gamma$ meets only the $1$--handles $M$ and
  crosses them transversely. The arcs of $\gamma$ crossing $D$ are
  called turn arcs. The arcs $\alpha_i$ are the maximal sub-arcs of
  $\partial D$ that meet $\partial \Sigma_1$. }\label{sigmapic}
\end{figure}
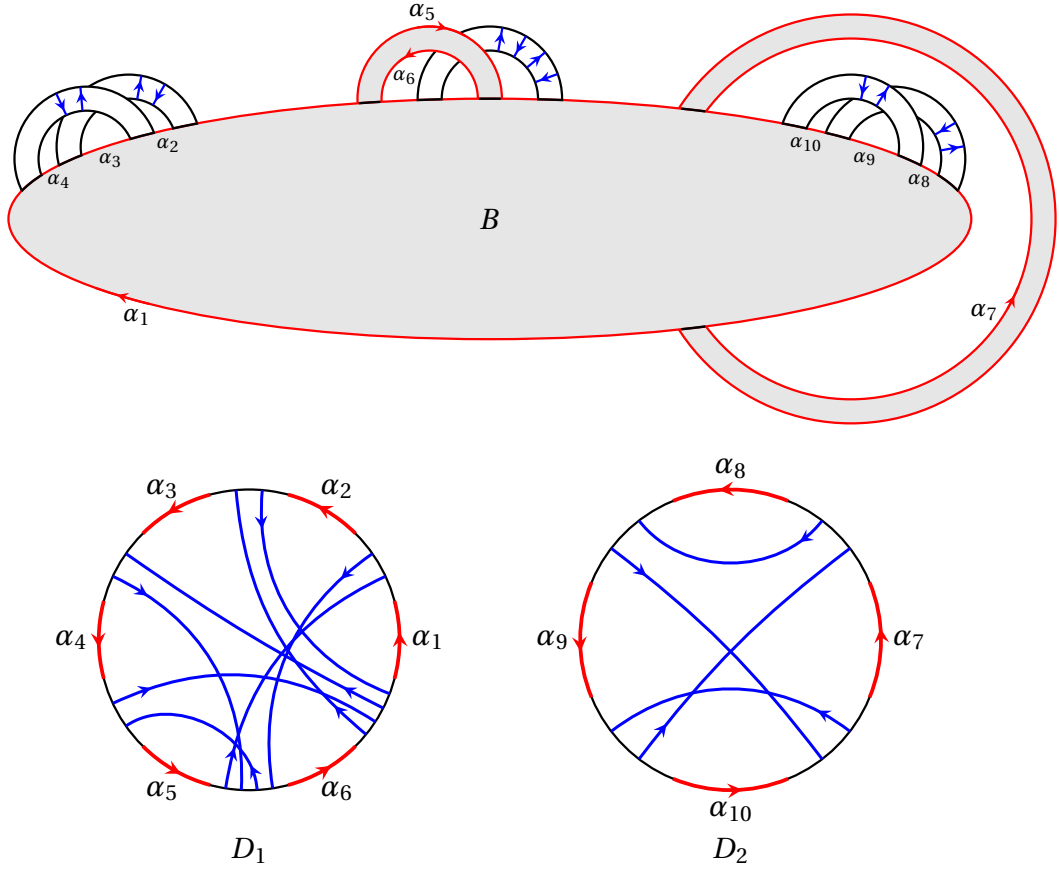

\begin{definition}[Turn paths] 
A \emph{turn path} is a loop $S^1 \to D$ which is a concatenation of
paths, alternating between turn arcs and immersed paths
in $\partial D$. Immersions $S^1 \to \partial D$ are included. 
We consider two turn paths to be the same if they differ by an
orientation-preserving reparametrization of $S^1$. See Figure
\ref{turnpic}. 

Each component of the oriented boundary of $D$ is a concatenation of
arcs, alternating between arcs in the boundary of $M$ and arcs in the
boundary of $\Sigma_1$. Let $\alpha_1, \dotsc, \alpha_n$ be the arcs
of $\partial D$ which lie in the boundary of $\Sigma_1$. 

A turn path is \emph{taut} if it uses each turn arc at most once and
each arc from $\{\alpha_i\} \cup \{\overline{\alpha}_i\}$ at most once
(where $\overline{\alpha}_i$ is the reverse of $\alpha_i$). Note that
there are only finitely many taut turn paths.  
\end{definition}

\begin{figure}[!ht]
\begin{tikzpicture}[line width = 0.3mm, scale = 0.8]

\draw (4, 0) circle (2.5cm);
\draw[-stealth] (-2, 0) -- (-.5, 0);

\small
\begin{scope}[decoration = {markings, mark = at position 0.6 with  {\arrow{stealth}}}]
\draw[shift={(4,0)}, red, line width = 0.4mm, postaction={decorate}] (45:2.5cm) arc(45:75:2.5cm) node[anchor = 240, pos = 0.5, black]{$\alpha_2$};
\draw[shift={(4,0)}, red, line width = 0.4mm, postaction={decorate}] (285:2.5cm) arc(285:315:2.5cm) node[anchor = 120, pos = 0.5, black]{$\alpha_6$};
\end{scope}

\begin{scope}[decoration = {markings, mark = at position 0.15 with  {\arrow{stealth}}}]
\draw[shift={(4, 0)}, lightgray, line width = 0.4mm, postaction={decorate}] (321:2.5cm) to[out=140, in = 275] (95:2.5cm);
\draw[shift={(4, 0)}, lightgray, line width = 0.4mm, postaction={decorate}] (205:2.5cm) to[out=385, in = 507] (327:2.5cm);
\draw[shift={(4, 0)}, lightgray, line width = 0.4mm, postaction={decorate}] (333:2.5cm) to[out=513, in = 325] (145:2.5cm);
\draw[shift={(4, 0)}, lightgray, line width = 0.4mm, postaction={decorate}] (35:2.5cm) to[out=215, in = 459] (279:2.5cm);
\draw[shift={(4, 0)}, lightgray, line width = 0.4mm, postaction={decorate}] (155:2.5cm) to[out=335, in = 447] (267:2.5cm);
\draw[shift={(4, 0)}, lightgray, line width = 0.4mm, postaction={decorate}] (273:2.5cm) to[out=453, in = 395] (215:2.5cm);
\draw[shift={(4, 0)}, blue, line width = 0.4mm, postaction={decorate}] (85:2.5cm) to[out=265, in = 519] (339:2.5cm);
\draw[shift={(4, 0)}, blue, line width = 0.4mm, postaction={decorate}] (261:2.5cm) to[out=441, in = 205] (25:2.5cm);
\end{scope}

\begin{scope}[cap=round, decoration = {markings, mark = at position 0.35 with  {\arrow{stealth}}}]
\draw[shift={(-6, 0)}, mygreen, line width = 0.5mm] (85:2.5cm) to[out=265, in = 519] (339:2.5cm);
\draw[shift={(-6, 0)}, white, line width = 3mm, postaction={decorate}] (261:2.5cm) to[out=441, in = 205] (25:2.5cm);
\draw[shift={(-6, 0)}, mygreen, line width = 0.5mm, postaction={decorate}] (261:2.5cm) to[out=441, in = 205] (25:2.5cm);
\end{scope}

\begin{scope}[decoration = {markings, mark = at position 0.5 with  {\arrow{stealth}}}]
\draw[shift={(-6, 0)}, mygreen, line width = 0.5mm] (25:2.5cm) arc(25:85:2.5cm);
\draw[shift={(-6, 0)}, mygreen, line width = 0.5mm] (339:2.5cm) arc(339:261:2.5cm);
\end{scope}

\draw (4.6,1.75) node{$\tau_1$};
\draw (3.25,-1.9) node{$\tau_2$};
\end{tikzpicture}

\caption{A taut turn path in $D_1$ which runs over $\tau_1$,
  $\overline{\alpha}_6$, $\tau_2$, and $\alpha_2$. 
Every turn path extends to a map of a disk; in this case the disk is
twisted. 
} \label{turnpic} 
\end{figure}
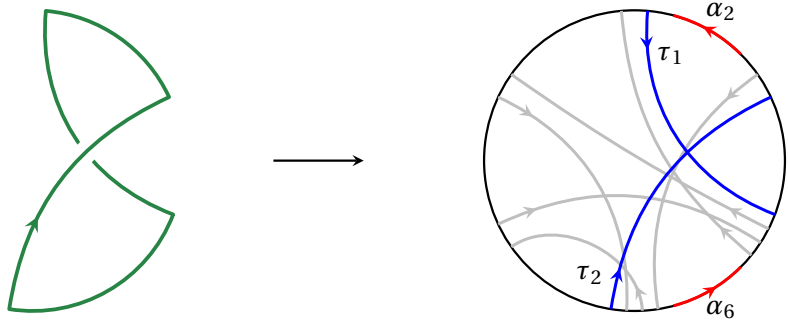

\begin{definition}\label{tautdef}
An admissible map $f\co S \to \Sigma$ is \textit{taut} if 
\begin{enumerate}
\item \label{t1} $f \co f^{-1}(\Sigma_2) \to \Sigma_2$ is an immersion
\item \label{t2} $f^{-1}(D)$ is a disjoint union of disks,
\item \label{t3} for each component $E$ of $f^{-1}(D)$, the boundary map
$f|_{\partial E}$ is a taut turn path. 
\end{enumerate}
The set of taut admissible maps for $\gamma$ will be denoted by
$\Ts(\gamma)$. The disks in \eqref{t2} and \eqref{t3} will be called
\emph{turn disks}.
\end{definition}

The main result of this section is the following.

\begin{proposition}\label{tautprop}
Suppose $f\co S \to \Sigma$ is an admissible map. Then there
exists a taut admissible map $f'\co S' \to \Sigma$ such that 
$\frac{-\chi(S')}{2n(S')} \leq \frac{-\chi(S)}{2n(S)}$.
\end{proposition}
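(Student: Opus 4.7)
The plan is to modify $f \co S \to \Sigma$ by a sequence of simplifications, each of which does not increase the ratio $-\chi(S)/2n(S)$, addressing the three conditions of Definition \ref{tautdef} in the order \eqref{t2}, \eqref{t1}, \eqref{t3}. After a preliminary general-position homotopy making $f$ transverse to $\partial \Sigma_1$, $\partial M$, and $\partial D$, I write $F = f^{-1}(D)$ and $G = f^{-1}(\Sigma_2)$, so that $S = F \cup G$ with common boundary $f^{-1}(\partial D)$.

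For condition \eqref{t2}, I first reduce to the incompressible case: whenever there is an essential simple closed curve $C \subset S$ with $f(C)$ nullhomotopic in $\Sigma$, I perform the compression from the definition of compressible. This decreases $-\chi(S)$ by $2$ and preserves $n(S)$, strictly reducing the ratio, and the process terminates. Since each $D_j$ is a disk, any essential simple closed curve in a component $E$ of $F$ is mapped to a nullhomotopic loop in $\Sigma$, and so by incompressibility it must be inessential in $S$. Using a disk it bounds in $S$, I perform a surgery reducing the complexity of $E$; iterating, every component of $F$ becomes a disk.

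For condition \eqref{t1}, I homotope $f|_G$ rel $\partial G$ to a branched immersion $G \to \Sigma_2$ via Gabai's techniques. Branch points can be freely slid throughout $\Sigma_2 \setminus \gamma$, so they can all be pushed into $\Sigma_1$, which is disjoint from $\gamma$. I then remove them by passing to a finite cover: a branch point of local order $d$ above $\Sigma_1$ becomes unbranched in a cover of $\Sigma_1$ whose local monodromy around the branch value has order divisible by $d$. Because $\Sigma_1$ has non-trivial topology (it deformation retracts onto the non-disk complementary region $\Sigma_0$), such covers exist, and Lemmas \ref{coverlem}--\ref{coverlem2} provide them while controlling behavior on boundaries. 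Reassembling $f^{-1}(\Sigma_1)$ using this cover produces an admissible surface $S'$ mapping to $\Sigma$ in which $f'|_{(f')^{-1}(\Sigma_2)}$ is an immersion; the ratio is preserved because both $-\chi$ and $n$ multiply by the number of sheets. Condition \eqref{t2} is preserved, since any cover of a disk is a disjoint union of disks.

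Condition \eqref{t3} is achieved by one more round of simplification: if $\partial E$ for some turn disk $E$ traverses a turn arc or an arc $\alpha_j$ more than once, I split $E$ along a properly embedded arc into two simpler turn disks, strictly reducing the total combinatorial complexity of the turn-path boundaries. After finitely many such steps every $\partial E$ is a taut turn path. The main obstacle is the branch-point removal in the middle step: the cover of $\Sigma_1$ must be engineered to simultaneously unwind every branch point while respecting the boundary data of $f^{-1}(\Sigma_1)$, and this is precisely where the non-filling hypothesis (equivalently, the non-trivial topology of $\Sigma_1$) is essential.
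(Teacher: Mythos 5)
Your overall architecture (reduce to the incompressible case, make the $D$-preimages disks, immerse over $\Sigma_2$, exploit covers of $\Sigma_1$) matches the paper's, but two of your steps have genuine gaps, and they interact badly because of the order in which you perform them. First, your treatment of condition \eqref{t3} is where the real difficulty of the proposition lives, and "split $E$ along a properly embedded arc into two simpler turn disks" does not work as stated. The map on the interior of a turn disk is completely uncontrolled (it may fold, twist, and branch), so cutting $E$ along an embedded arc does not produce a well-defined map to $\Sigma$ on the re-glued surface; the paper instead deletes the interior of $E$ and re-fills the new boundary circles with fresh disks. More seriously, to eliminate a repeated occurrence of an arc $\alpha_j$ in $\partial E$ one must re-route the surface across $\alpha_j$, and the only place where the two sheets can be compatibly cross-joined is on the $\Sigma_1$-side of $\alpha_j$ (where the map is an immersion); this cut-and-cross-join necessarily creates an index-two branch point over $\Sigma_1$. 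That is precisely why the paper performs tautening (Lemma \ref{tautlemma2}) \emph{before} the covering step (Lemma \ref{removebranchpoints}): doing \eqref{t3} last, as you propose, would re-introduce branch points and destroy \eqref{t1}. You also need to check $\chi$ and $n$ under these moves (the repeated-$\tau_i$ move re-routes $\partial S$ itself; one must verify the new boundary still maps by positive powers of $\gamma$ with the same total degree), which your "complexity decreases" argument does not address.

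Second, your mechanism for removing branch points over $\Sigma_1$ is not correct. A branch point is not "unwound" by passing to a cover of $\Sigma_1$ with prescribed "local monodromy around the branch value": in any unbranched cover of $\Sigma_1$ the monodromy around an interior point is trivial, and pulling a branched cover back along a cover of the base does not reduce the branching order. What actually works (and what Lemma \ref{removebranchpoints} does) is a replacement, not a pullback: excise the branched piece $f^{-1}(\Sigma_1)$ entirely and glue in an honest finite cover of $\Sigma_1$ of the same positive and negative degrees; Riemann--Hurwitz gives $\chi(f^{-1}(\Sigma_1)) \leq r\chi(\Sigma_1)$, so $\chi$ can only increase. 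The delicate point, which your sketch omits, is making the boundary degrees of the replacement cover match those of the rest of the surface; this forces one to pass to further finite covers of \emph{all} the pieces (with boundary degrees normalized to a common value $\pm d$) and to take $N$ copies of everything, so that $n(S)$ is multiplied by $N$ as well and the ratio is preserved. Without this boundary-matching step the pieces cannot be reassembled into a closed-up admissible surface.
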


\begin{corollary}\label{tautcor}
There is an equality 
\[\scl(\gamma) \ = \ \inf_{S \in \Ts(\gamma)}
  \frac{-\chi(S)}{2n(S)}.\] 
\end{corollary}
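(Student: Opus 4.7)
The plan is to deduce the equality from Proposition \ref{tautprop} by proving both inequalities separately. The inequality
\[\scl(\gamma) \ \leq \ \inf_{S \in \Ts(\gamma)} \frac{-\chi(S)}{2n(S)}\]
is immediate: since every taut admissible map is in particular an admissible map for $\gamma$, the set $\Ts(\gamma)$ is contained in the set of all admissible surfaces, and the infimum of $\frac{-\chi(S)}{2n(S)}$ over a smaller set can only be larger (or equal). By the definition of $\scl(\gamma)$ as the infimum over all admissible surfaces, this yields the claimed bound.

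For the reverse inequality, I would fix an arbitrary admissible map $f \co S \to \Sigma$ and apply Proposition \ref{tautprop} to produce a taut admissible map $f' \co S' \to \Sigma$ with $\frac{-\chi(S')}{2n(S')} \leq \frac{-\chi(S)}{2n(S)}$. Since $S' \in \Ts(\gamma)$, this gives
\[\inf_{S' \in \Ts(\gamma)} \frac{-\chi(S')}{2n(S')} \ \leq \ \frac{-\chi(S)}{2n(S)}.\]
Taking the infimum over all admissible $S$ on the right then produces
\[\inf_{S' \in \Ts(\gamma)} \frac{-\chi(S')}{2n(S')} \ \leq \ \scl(\gamma),\]
which combined with the first inequality gives the desired equality.

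There is no real obstacle here — the entire content is in Proposition \ref{tautprop}, and the corollary is just the formal observation that restricting the infimum in the definition of scl to taut admissible surfaces does not change its value. The only care required is to track which direction is the trivial containment of sets and which invokes the nontrivial replacement step.
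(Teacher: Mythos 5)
Your proposal is correct and is exactly the intended deduction: the paper states the corollary without proof immediately after Proposition \ref{tautprop}, precisely because the two inequalities follow from the containment $\Ts(\gamma)$ inside all admissible surfaces and from the replacement step of the proposition, as you describe.
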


The next lemma provides the first step toward
Proposition~\ref{tautprop}. It achieves all the desired properties 
except for tautness of the turn paths in item \eqref{t3}. 

\begin{lemma}\label{tautlemma1}
Suppose $f\co S \to \Sigma$ is an admissible map which is
incompressible. Then $f$ is homotopic to an admissible map $g\co
S \to \Sigma$ such that 
\begin{enumerate}
\item \label{tt1} $g \co g^{-1}(\Sigma_2) \to \Sigma_2$ is an immersion
\item \label{tt2} $g^{-1}(D)$ is a disjoint union of disks,
\item \label{tt3} for each component $E$ of $g^{-1}(D)$, the boundary map
$g|_{\partial E}$ is a turn path. 
\end{enumerate}
The boundary maps $f\vert_{\partial S}$ and $g\vert_{\partial S}$
differ only by reparametrization. 
\end{lemma}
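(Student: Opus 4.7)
The plan is to modify $f$ by a sequence of homotopies using the handle structure $\Sigma = \Sigma_1 \cup M \cup D$ together with incompressibility. As a first step, I would perturb $f$ to be smooth and transverse to the $1$--submanifold $\partial D$ and to the cocore arcs $\mu_j$ of the bands $M$, so that $f^{-1}(\partial D)$ is a properly embedded $1$--submanifold of $S$ meeting $\partial S$ transversely in the preimages of the endpoints of turn arcs.

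The next stage is the main use of incompressibility: homotope $f$ so that $f^{-1}(D)$ becomes a disjoint union of disks. Each component of $\partial D$ bounds a disk in $\Sigma$, so every circle component $C$ of $f^{-1}(\partial D)$ lying in $\interior S$ is mapped to a null-homotopic loop; by incompressibility, $C$ bounds a disk $\Delta$ in $S$. Choosing $C$ innermost and homotoping $f|_\Delta$ (rel $\partial \Delta$) to push $\Delta$ into whichever side of $\partial D$ eliminates $C$ from $f^{-1}(\partial D)$ reduces the number of such circles. A parallel innermost-bigon reduction, using arcs of $f^{-1}(\partial D)$ together with arcs of $\partial S$ mapping into a single component of $D$, eliminates unnecessary boundary pieces that meet $\partial S$. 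Iterating, each component of $f^{-1}(D)$ becomes a disk. The main bookkeeping hazard is checking that the innermost disks and bigons lie on the correct side of $\partial D$ so that the reductions do not recreate crossings.

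After this, I would homotope $f$ on $f^{-1}(\Sigma_2)$ to be an immersion. Using transversality to the $\mu_j$, the preimage $f^{-1}(M)$ can be put into standard band form, with each band of $M$ pulled back to a disjoint union of sub-bands of $S$ mapped by the obvious product. The preimage $f^{-1}(\Sigma_1)$ is then a compact subsurface of $\interior S$, and $f$ restricted to it is $\pi_1$--injective by incompressibility; invoking the standard fact that a $\pi_1$--injective map of a bounded surface to a surface of non-trivial topology can be homotoped rel boundary to an immersion, I make $f|_{f^{-1}(\Sigma_1)}$ an immersion without disturbing $f^{-1}(D)$ or the band form on $f^{-1}(M)$. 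I expect this to be the technical heart of the argument, because simultaneously arranging immersion on $f^{-1}(\Sigma_1)$ and band form on $f^{-1}(M)$ relies essentially on the non-trivial topology of $\Sigma_1$ provided by the non-filling hypothesis.

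Finally, each disk component $E$ of $g^{-1}(D)$ has boundary alternating by construction between arcs of $g^{-1}(\partial D)$ (mapping to immersed paths in $\partial D$) and arcs of $\partial S$ (mapping to turn arcs of $\gamma \cap D$), which is precisely a turn path. Since every homotopy used above can be arranged to leave $f|_{\partial S}$ fixed up to reparametrization, the resulting map $g$ satisfies \eqref{tt1}--\eqref{tt3} and agrees with $f$ on $\partial S$ up to reparametrization.
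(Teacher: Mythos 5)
There is a genuine gap at the step you yourself flag as the technical heart: making $f$ an immersion on $f^{-1}(\Sigma_1)$ \emph{after} you have already frozen $f^{-1}(D)$ as disks and $f^{-1}(M)$ as bands. The ``standard fact'' you invoke---that a $\pi_1$--injective map of a bounded surface into a surface of non-trivial topology can be homotoped rel boundary to an immersion---is false in the form you need it. Since $\partial S$ maps to $\gamma$ and $\gamma$ misses $\Sigma_1$, the subsurface $f^{-1}(\Sigma_1)$ is disjoint from $\partial S$, so a proper immersion $f^{-1}(\Sigma_1)\to\Sigma_1$ extending the given boundary behavior would necessarily be a covering map; this forces $\chi(f^{-1}(\Sigma_1))$ to be the corresponding multiple of $\chi(\Sigma_1)$, and there is no reason this holds for an arbitrary admissible $S$ once the rest of the decomposition is fixed. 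This is precisely the branch-point obstruction that the paper has to confront separately (Lemmas \ref{tautlemma2} and \ref{removebranchpoints}, which create branch points over $\Sigma_1$ and then remove them by passing to finite covers). In your ordering there is no remaining freedom to absorb the discrepancy.

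The paper avoids this by working in the opposite order and never trying to immerse a \emph{fixed} subsurface rel boundary. Following Gabai, one first makes $f$ transverse to the $0$--handle $B$ (so $f^{-1}(B)$ is a union of disks mapping homeomorphically) and to the co-core arcs $\lambda_i,\mu_i$ of the $1$--handles, and minimizes a lexicographic complexity; incompressibility and orientation considerations then force the preimages of the arcs to be embedded arcs with distinct endpoints, which are tightened to immersions and thickened. The immersion on all of $\Sigma_2$ is then automatic from the handle structure---the preimage of $\Sigma_1$ is tautologically a union of disks and bands covering $B\cup L$, with no Euler characteristic constraint to verify, because the homotopy is allowed to change \emph{which} subsurface of $S$ lies over $\Sigma_1$. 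All the excess topology of $S$ is thereby pushed into $f^{-1}(D)$, where non-disk components are handled exactly as you suggest but using $\pi_2(\Sigma)=0$ to remap an entire compressing disk of $S$ into $D$. Your innermost-circle treatment of $f^{-1}(\partial D)$ is broadly in the right spirit for that last step, but without the handle-by-handle construction of the immersion your argument does not go through. (Also, the non-trivial topology of $\Sigma_1$ is not what makes the immersion step work---it is needed later, to kill branch points via finite covers.)
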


\begin{proof}
We proceed as in Step 1 of \cite[Theorem 2.1]{Gabai}. Homotope $f$
rel boundary to $f_1$ which is transverse to the disk $B$. We then
have that each component of $f_1^{-1}(B)$ is a disk 
mapping homeomorphically onto $B$. Next, let $T = S -
\interior(f_1^{-1}(B))$. Since $f_1\vert_{\partial T}$ is
transverse to the arcs $\lambda_i$ and $\mu_i$, we may homotope $f_1$
rel $\partial S \cup f_1^{-1}(B)$ to be transverse to these
arcs. Among all maps $f_2$ homotopic to $f$ rel $\partial S$ that are
transverse to $B$ and transverse to the arcs $\lambda_i$ and $\mu_i$
on $S - \interior(f_2^{-1}(B))$,
let $f_2$ minimize the \emph{complexity}, which is the pair 
$\left(\abs{\pi_0\bigl(f_2^{-1}\bigl(B\bigr)\bigr) }, \abs{
\pi_0\bigl(f_2^{-1}\bigl(\bigcup_i \lambda_i \cup \bigcup_i
\mu_i\bigr)\bigr)}\right)$ ordered lexicographically. 

Since $f_2$ is incompressible, no component of $f_2^{-1}(\lambda_i)$
or $f_2^{-1}(\mu_i)$ is an essential closed curve. No component
is an inessential closed curve either. Suppose $C$ is such a curve
mapping to $\lambda_i$, say. For a sufficiently close arc $\lambda_i'$
that is parallel to $\lambda_i$, there is a curve $C'$ parallel to $C$
mapping to $\lambda_i'$ that bounds a disk $E \subset S$, with $C$ in
the interior of $E$. By re-defining the map on $\interior(E)$ to map
into $\lambda_i'$ one obtains a map of smaller complexity; the new map
is homotopic to $f_2$ rel $\partial S$ because $\pi_2(\Sigma)=0$.  

Thus each component of $f_2^{-1}(\lambda_i)$ is an arc with endpoints
mapping to $\partial \lambda_i$, and each component of
$f_2^{-1}(\mu_i)$ is an arc with endpoints mapping to $\partial \mu_i
\cup \{\mu_i \cap \gamma(\coprod S^1)\}$. Note that in the second
case, both endpoints cannot map to the same point $z \in \{\mu_i \cap
\gamma(\coprod S^1)\}$, by orientation considerations. There is a
consistent transverse orientation on $f_2^{-1}(\mu_i)$, but
$f_2\vert_{\partial S}$ always passes through $z$ in the same
direction, being a positive power of $\gamma$. 

Since complexity is minimized, both endpoints cannot map to the same
point of $\partial \mu_i$; otherwise,
$\abs{\pi_0\bigl(f_2^{-1}\bigl(\bigcup_i \lambda_i \cup \bigcup_i
\mu_i\bigr)\bigr)}$ could be reduced, and then
$\abs{\pi_0\bigl(f_2^{-1}\bigl(B\bigr)\bigr)}$ as
well by homotopy rel $\partial S$ to a new map which is still transverse
to $B \cup \{\mu_i\} \cup \{\lambda_i\}$. Hence the endpoints map to
distinct points. The same reasoning applies to components of
$f_2^{-1}(\lambda_i)$. Hence the endpoints of every arc in
$f_2^{-1}(\lambda_i)$ or $f_2^{-1}(\mu_i)$ map to distinct points. 

Now, by a further homotopy rel $\partial S \cup f_2^{-1}(B)$, the
components of $f_2^{-1}(\lambda_i)$ and $f_2^{-1}(\mu_i)$ can be
tightened so that each maps by an immersion; let $f_3$ be the
resulting map. There are compatible tubular neighborhoods on which
$f_3$ is a bundle map, and by another homotopy to $f_4$ (which expands
along fibers of the tubular neighborhoods $L$ and $M$) we can ensure
that each component of $f_4^{-1}(L)$ and $f_4^{-1}(M)$ maps by an
immersion. Now $f_4 \co f_4^{-1}(\Sigma_2) \to \Sigma_2$ is an
immersion, and $f_4$ satisfies \eqref{tt1}. 

Next consider a component $E$ of $f_4^{-1}(D)$. Each boundary
component of $\partial E$ maps by a concatenation of paths that
alternate between turn arcs and paths in $\partial D$;
moreover these latter paths are part of the boundary map of $f_4 \co
f_4^{-1}(\Sigma_2) \to \Sigma_2$ and hence are immersions. Thus
$\partial E$ maps to $D$ by turn paths and $f_4$ satisfies \eqref{tt3}. 

Next, suppose some component $E$ of $f_4^{-1}(D)$ is not a disk. Then
there is a simple closed curve $C \subset \interior(E)$ which is
essential in $E$. Since $f_4(C)$ is nullhomotopic in $\Sigma$, we have
that $C$ bounds a disk $E' \subset S$. Since $\pi_2(\Sigma)=0$ there
is a homotopy rel $S - \interior(E')$ from $f_4$ to $f_5$ such that
$f_5(E') \subset D$. By performing finitely many homotopies of this
type, we arrive at $g$ such that each component of $g^{-1}(D)$ is a
disk and $g$ agrees with $f_4$ on $g^{-1}(\Sigma - \interior(D)) =
g^{-1}(\Sigma_2)$. In particular, $g$ satisfies \eqref{tt1} and
\eqref{tt2}. 

Lastly, $f_4^{-1}(\partial D) \supseteq g^{-1}(\partial D)$ and $f_4$
and $g$ agree on $g^{-1}(\partial D)$ (and on $\partial S$), so the
boundary paths of $g^{-1}(D)$ are unchanged and $g$ satisfies
\eqref{tt3}. For the last statement, note that $f\vert_{\partial S} =
f_3\vert_{\partial S}$ and $f_4\vert_{\partial S} = g\vert_{\partial
S}$, and the change from $f_3\vert_{\partial S}$ to
$f_4\vert_{\partial S}$ simply reparametrizes along $\partial S$. 
\end{proof}

The map $g$ given by the previous lemma may be quite badly behaved on
$g^{-1}(D)$, with branch points, folding, twisting, etc. This will mostly not
be a concern for us; what matters most is the boundary maps on
$g^{-1}(D)$. The next lemma achieves tautness of these boundary maps
at the expense of creating branch points over $\Sigma_1$. 

\begin{lemma}\label{tautlemma2}
Suppose $g\co S \to \Sigma$ is an admissible map which satisfies
the conclusions of Lemma~\ref{tautlemma1}. Then there is an
admissible map $g_1\co S_1 \to \Sigma$ with $n(S_1) = n(S)$ and
$\chi(S) \leq \chi(S_1)$ such that 
\begin{enumerate}
\item \label{T1} $g_1 \co g_1^{-1}(\Sigma_2) \to \Sigma_2$ is a branched
immersion with all branch points in the interior of $g_1^{-1}(\Sigma_1)$, 
\item \label{T2} $g_1^{-1}(D)$ is a disjoint union of disks,
\item \label{T3} for each component $E$ of $g_1^{-1}(D)$, the boundary map
$g_1|_{\partial E}$ is a taut turn path,
\item \label{T4} $g_1$ is an immersion on every component of
$g_1^{-1}(\Sigma_3)$ that does not meet $\partial S_1$.
\end{enumerate}
\end{lemma}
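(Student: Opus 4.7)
The plan is to reduce the number of excess repetitions (across all turn disks) of turn arcs and of arcs in $\{\alpha_i\} \cup \{\overline{\alpha}_i\}$, by a sequence of local modifications to the map $g$. Define a complexity $c(g) = \sum_{E} c(E)$, where $E$ ranges over turn disks and $c(E)$ counts, for each turn arc $\tau_j$ used $k_j \geq 1$ times in $g|_{\partial E}$, the quantity $k_j - 1$, plus the analogous count for arcs in $\{\alpha_i\} \cup \{\overline{\alpha}_i\}$. If $c(g) = 0$, then $g$ is already taut and we take $g_1 = g$. Otherwise, I will describe a single modification producing an admissible map $g' \co S' \to \Sigma$ with $c(g') < c(g)$, satisfying $n(S') = n(S)$, $\chi(S') \geq \chi(S)$, and the conclusions \eqref{tt1}--\eqref{tt3} of Lemma~\ref{tautlemma1}, such that all branch points so far introduced lie in the interior of $(g')^{-1}(\Sigma_1)$. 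Iterating yields $g_1$ satisfying \eqref{T1}, \eqref{T2}, and \eqref{T3}.

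The key modification addresses a turn disk $E$ whose boundary turn path $g|_{\partial E}$ repeats some arc, which we handle as a ``disk exchange'' operation. Let $c_1, c_2 \subset \partial E$ be two disjoint subarcs mapping to the same turn arc $\tau$ (or the same $\alpha_i$ or its reverse). Choose a properly embedded arc $\beta \subset E$ connecting a point of $c_1$ to a point of $c_2$ so that the surgery $S \rightsquigarrow S'$ obtained by cutting along $\beta$ and reassembling replaces $E$ by two disks in which $\tau$ (or $\alpha_i$) is used strictly fewer times. When the repeated arc is an $\alpha_i$, both $c_1$ and $c_2$ border pieces of $g^{-1}(\Sigma_1)$ along $\partial \Sigma_1$, and the reassembly can be performed by gluing across a small sub-region of these pieces; the required flexibility (potentially introducing a branch point inside $\Sigma_1$ to accommodate non-matching orientations) is provided by Lemma~\ref{coverlem3}. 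When the repeated arc is a turn arc $\tau$, the cut produces a pair of new boundary arcs both mapping to $\tau$, and the reassembly is a bridge piece $F$ attaching these arcs together; by sliding $F$ across a neighborhood of the relevant endpoint of $\tau$ into $g_1^{-1}(\Sigma_1)$ (using Lemma~\ref{coverlem3} to realize the appropriate branched cover of a disk in $\Sigma_1$), the single branch point required to close up the configuration can be sited in the interior of $(g')^{-1}(\Sigma_1)$. Throughout, $\partial S$ is not modified, so $n(S') = n(S)$, and the $\chi$-contributions of the cut ($+1$), the bridge disk ($+1$), and a single branch point ($-1$) yield $\chi(S') \geq \chi(S)$.

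After finitely many such modifications, $c = 0$ and \eqref{T1}--\eqref{T3} hold. For \eqref{T4}, a taut turn path on $\partial E$ is a simple loop in the appropriate disk component $D_k$ of $D$, so it bounds an embedded sub-disk $E^\ast \subset D_k$; I redefine $g_1|_E$ to be a homeomorphism onto $E^\ast$. This is an immersion on each turn disk and agrees with the prescribed boundary map. Combined with the immersion property on $g_1^{-1}(M)$ inherited from \eqref{T1}, this yields \eqref{T4}: on any component of $g_1^{-1}(\Sigma_3) = g_1^{-1}(D \cup M)$ disjoint from $\partial S_1$, the map $g_1$ is a composition of immersions with matching boundary identifications, hence is itself an immersion.

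The main technical obstacle will be the repeated-turn-arc case of the disk exchange, since the direct reassembly occurs naturally near $\tau$ (inside $D \cup M$) rather than inside $\Sigma_1$. The resolution uses the nontrivial topology of $\Sigma_1$, which is available here precisely because $\Sigma_0$ is a non-disk complementary component of $\gamma$: this extra topology accommodates the branched cover needed to relocate the branch point into the interior of $g_1^{-1}(\Sigma_1)$ without disturbing $\partial S$ or the already-taut boundaries of other turn disks. Verifying that repeated application of the surgery in fact terminates (i.e., that each surgery strictly decreases $c$ and does not inadvertently create new repetitions elsewhere) will also require care, particularly in bookkeeping the combinatorics of how $c_1$ and $c_2$ are embedded along $\partial E$.
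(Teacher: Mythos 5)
Your overall strategy (local surgeries on bad turn disks that strictly reduce a count of repeated arcs, with any branch points pushed into $\Sigma_1$) is the same as the paper's, and your treatment of a repeated $\alpha_i$ is essentially the paper's move: cut along two lifts of a short arc based at the midpoint of $\alpha_i$ and cross-glue, creating a single index-two branch point in the interior of $g^{-1}(\Sigma_1)$ and splitting $\partial E$ into two turn paths that are refilled with disks. (Citing Lemma~\ref{coverlem3} for this is not quite right---that lemma builds branched covers with prescribed boundary degree partitions and is not a tool for modifying a given map---but the underlying cut-and-cross-glue surgery you are gesturing at is the correct one.)

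The genuine gap is the repeated-turn-arc case. The two instances of $\tau_i$ in $\partial E$ are sub-arcs of $\partial S$, and both map to the oriented arc $\tau_i$ of $\gamma$ positively, because $\partial S$ maps by positive powers of $\gamma$. Consequently no branch point is needed at all: delete the interior of $E$, cut the two instances at their midpoints, and glue the first half of one to the second half of the other and vice versa. Since the orientations agree, the new boundary still maps by positive powers of $\gamma$ with the same total degree, $\partial E$ splits into two turn paths, and refilling with two disks increases $\chi$ by one. Your version instead asserts that a branch point is ``required to close up the configuration'' and proposes to slide a bridge piece ``across a neighborhood of the relevant endpoint of $\tau$ into $g_1^{-1}(\Sigma_1)$.'' This does not work as stated: the endpoints of $\tau$ lie on $\partial D \cap \partial S$, a branch point cannot sit on $\partial S$, and no mechanism is given for transporting a singularity of the map from over $D$ across $M$ into $\Sigma_1$ while keeping $\partial S$ mapping to $\gamma$; condition \eqref{T1} would fail. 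Relatedly, your claim that ``$\partial S$ is not modified'' is false in this case---the surgery necessarily re-pairs arcs of $\partial S$---so $n(S')=n(S)$ needs the orientation argument above, not the assertion that the boundary is untouched. A second, smaller error: a taut turn path need not be a simple loop in $D$ (the paper's Figure \ref{turnpic} shows a taut turn path bounding a \emph{twisted} disk), so you cannot replace every turn disk by a homeomorphism onto an embedded subdisk. For \eqref{T4} you only need this for components of $g_1^{-1}(D)$ disjoint from $\partial S$; those boundary turn paths traverse no turn arcs, hence are homeomorphisms onto components of $\partial D$, and there the replacement is valid.
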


\begin{proof}
Let $E$ be a component of $g^{-1}(D)$. There are two types of moves
we will use to achieve tautness in item \eqref{T3}. Suppose first that
$\partial E$ contains two or more instances of the turn arc
$\tau_i$. Delete the interior of $E$, cut the midpoints of the two
instances of $\tau_i$ and glue the first half of one to the second
half of the other and vice versa. This splits $\partial E$ into two
components, which can be filled by two disks mapping to $D$,
increasing $\chi(S)$. After finitely many moves of this type, there
are no repeated instances of turn arcs in components of
$\partial g^{-1}(D)$. The new boundary of $S$ still maps by positive
powers of $\gamma$ with degree $n(S)$. 

Now suppose that the arc $\alpha \in \{\alpha_i\} \cup \{
\overline{\alpha}_i\}$ appears two or more times in $\partial
E$. These two occurrences are also part of $\partial
g^{-1}(\Sigma_1)$; denote them by $\alpha'$ and $\alpha''$. Let $U$ be
an open neighborhood of $\alpha$ in $\Sigma_1$ that is evenly covered
by $g$, and let $U'$, $U''$ be the corresponding neighborhoods of
$\alpha'$ and $\alpha''$ in $g^{-1}(\Sigma_1)$. Let $\beta$
be a small arc in $U$ from the midpoint of $\alpha$ to an interior
point of $U$. Let $\beta'$, $\beta''$ be the lifts of $\beta$ to $U'$
and $U''$ respectively. Now delete the interior of $E$ and cut along
$\beta'$ and $\beta''$. Rejoin the left side of $\beta'$ to the right
side of $\beta''$ and vice versa. This creates an index two branch
point in $g^{-1}(\Sigma_1)$ and splits $\partial E$ into two
components, which can be filled with two disks mapping to $D$. The
Euler characteristic $\chi(S)$ does not change, and neither does
$n(S)$, because this move takes place in the interior of $S$. After
finitely many moves of this second type, we arrive at $g_0 \co S_1
\to\Sigma$ satisfying \eqref{T1}--\eqref{T3}.

Let $E$ be a component of $g_0^{-1}(D)$ which does not meet $\partial
S$. Then the map $g_0\vert_{\partial E}$ is a homeomorphism to
$\partial D'$ for some component $D'$ of $D$ by \eqref{T3}. For each such
$E$, we replace $g_0\vert_E$ by a homeomorphism $E \to D'$ extending
$g_0\vert_{\partial E}$ to get a new map $g_1 \co S_1 \to
\Sigma$. Note that $g_1$ still satisfies \eqref{T1}--\eqref{T3}. By
\eqref{T1} and the above, $g_1$ also satisfies \eqref{T4}. 
\end{proof}

For the last step of the argument, we use finite covers to eliminate
the branch points over $\Sigma_1$. This step depends on $\Sigma_1$
having non-trivial topology. Note that conclusions \eqref{TT1} and
\eqref{TT2} of this lemma are not required for Proposition
\ref{tautprop} but will be useful in a later section. 

\begin{lemma} \label{removebranchpoints}
Suppose $g_1\co S_1 \to \Sigma$ is an admissible map which satisfies
the conclusions of Lemma~\ref{tautlemma2}. Let $r$ be the number of
sheets of the branched cover $g_1 \co g_1^{-1}(\Sigma_1) \to
\Sigma_1$. Then there is a taut admissible map $g_2\co S_2 \to \Sigma$
and a positive integer $N$ such that 
\begin{enumerate}
\item \label{TT1} $n(S_2) = N n(S_1)$,
\item \label{TT2} $\chi(g_2^{-1}(\Sigma_3)) = N \chi(g_1^{-1}(\Sigma_3))$
and $\chi(g_2^{-1}(\Sigma_1)) = N r \chi(\Sigma_1)$,
\item \label{TT3} $\frac{-\chi(S_2)}{2n(S_2)} \leq \frac{-\chi(S_1)}{2n(S_1)}$.
\end{enumerate}
\end{lemma}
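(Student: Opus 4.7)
My plan is to preserve the $\Sigma_3$-side of $S_1$ but to replace the branched $\Sigma_1$-side by an unbranched cover, taking enough copies of the $\Sigma_3$-side so that the pieces fit together. Write $V := g_1^{-1}(\Sigma_1)$ and $T := g_1^{-1}(\Sigma_3)$, so $S_1 = V \cup T$, glued along $\partial V \subset \partial T$. By Lemma~\ref{tautlemma2}, $g_1|_V \co V \to \Sigma_1$ is a branched cover of degree $r$ with index-$2$ branch points in $\interior(V)$; over each boundary component $\partial_j\Sigma_1$ of $\Sigma_1$, the preimage in $V$ is a collection of circles whose covering degrees form a partition $\pi_j$ of $r$.

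The main technical step, and what I expect to be the main obstacle, is to construct an unbranched cover $\widetilde V \to \Sigma_1$ of degree $Nr$ (for some sufficiently large $N$) whose boundary over each $\partial_j\Sigma_1$ realizes the partition $\pi_j$ repeated $N$ times. This is where the hypothesis on $\Sigma_1$ is essential: since $\Sigma_1$ is not a disk, $\pi_1(\Sigma_1)$ is free of positive rank, and by choosing images of its free generators in $S_{Nr}$ appropriately for $N$ sufficiently large one can arrange the required cycle types on each boundary loop simultaneously. One may also produce $\widetilde V$ more directly by combining the covering lemmas of Section~2 with disjoint unions of copies of $\Sigma_1$ and boundary-adjacent adjustments.

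Given $\widetilde V$, set $S_2 := \widetilde V \cup NT$, where $NT$ is $N$ disjoint copies of $T$, with the $i$-th copy of each boundary circle of $\widetilde V$ over $\partial_j\Sigma_1$ glued to the corresponding circle in the $i$-th copy of $T$ (using the identifications from $S_1$). Define $g_2 \co S_2 \to \Sigma$ by the cover map on $\widetilde V$ and by $g_1|_T$ on each copy of $T$; this is admissible with $n(S_2) = N n(S_1)$. Tautness is then straightforward: conditions~\eqref{t2} and~\eqref{t3} pass from $g_1$ because $g_2^{-1}(D) = N \cdot g_1^{-1}(D)$ is still a disjoint union of disks mapped by taut turn paths, and~\eqref{t1} holds since $g_2$ is an immersion on $\widetilde V$ (unbranched) and on $g_2^{-1}(M)$ (inherited from $g_1$, all of whose branch points lay in $V$). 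The identities in~\eqref{TT2} are immediate from the construction, and~\eqref{TT3} follows from Riemann--Hurwitz $\chi(V) = r\chi(\Sigma_1) - \sum_i(e_i - 1)$, which gives $\chi(S_2) - N\chi(S_1) = N \sum_i (e_i - 1) \geq 0$; dividing by $2n(S_2) = 2N n(S_1)$ yields the stated inequality.
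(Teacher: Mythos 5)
Your decomposition and the Euler characteristic bookkeeping at the end are fine, and you have correctly located the crux: producing an unbranched cover $\widetilde V \to \Sigma_1$ whose boundary behaviour matches that of the branched cover $V = g_1^{-1}(\Sigma_1)$, copied $N$ times. But that step, as you set it up, is not merely unproved --- it is false in general, because you keep $T = g_1^{-1}(\Sigma_3)$ rigid (only taking $N$ disjoint copies), which forces $\widetilde V$ to realize \emph{exactly} the signed partition $N\pi_j$ over each component $\partial_j\Sigma_1$. The boundary monodromies of a cover of $\Sigma_1$ cannot be prescribed independently: in $\pi_1(\Sigma_1)$ the product of the boundary classes is a product of commutators of the genus generators, so their images in the symmetric group are constrained. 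The cleanest failure is when $\Sigma_1$ is an annulus (a perfectly admissible case, since non-filling only provides a non-disk complementary component): every unbranched cover of an annulus is classified by one monodromy permutation and therefore induces the \emph{same} partition on both boundary circles, whereas a single index-$2$ branch point --- exactly what Lemma~\ref{tautlemma2} creates --- can produce $V$ with partitions $(2)$ and $(1,1)$ on the two sides (a pair of pants double-covering the annulus). No multiple $N\pi_1$, $N\pi_2$ of that data is realized by any unbranched cover, so your $\widetilde V$ does not exist. Even for planar $\Sigma_1$ with three or more boundary components, realizing prescribed cycle types whose product is the identity is a nontrivial Hurwitz-type problem that the sentence ``choose images of the free generators appropriately'' does not settle; Ore's theorem rescues the positive-genus case, but you would need to say so.

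The repair is to give up rigidity on the $\Sigma_3$ side too. Pass to covers of the components of $g_1^{-1}(\Sigma_3)$ meeting $\partial S_1$ (Lemma~\ref{coverlem2} applies because each such component has a boundary circle on $\partial S_1$ outside the controlled set), and replace the remaining components --- which are honest covers of components of $\Sigma_3$ by conclusion \eqref{T4} of Lemma~\ref{tautlemma2} --- by covers of those components, so that every boundary circle over $\partial\Sigma_3 = \partial\Sigma_1$ acquires degree $\pm d$ for one fixed $d$. Then on the $\Sigma_1$ side you only ever need the uniform partition $(d,\dots,d)$ over every boundary component, which Lemma~\ref{coverlem}\eqref{cover2} supplies; taking suitable numbers of copies with suitable orientations makes the two sides match, and the identities \eqref{TT1}--\eqref{TT3} then come out by the same computation you gave.
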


\begin{proof}
Let $T_3''$ be the union of all components of $g_1^{-1}(\Sigma_3)$
which meet $\partial S_1$. Let
$$  d = \lcm\{\text{degree of } C' \overset{g_1}{\to} C \mid C', C
\text{ are components of } \partial T_3'', \partial \Sigma_3 \text{
  resp.}\}$$ 
By Lemma \ref{coverlem2}, there is a finite-sheeted cover $T_3' \to
T_3''$ such that every component of $\partial T_3'$ covers a component
of $\partial \Sigma_3$ with degree $\pm d$ under the composition $T_3'
\to T_3'' \overset{g_1}{\to} \Sigma_3$. (The orientation of a covering
surface will be induced by the base unless indicated otherwise.) Let
$\Sigma_{3, 1}, \dots, \Sigma_{3, \ell}$ be the components of
$\Sigma_3$. By Lemma \ref{coverlem}\eqref{cover2}, there are connected
finite-sheeted covers $T_{3, i}' \to \Sigma_{3, i}$ and $T_1' \to
\Sigma_1$ such that every component of $\partial T_{3, i}'$ and
$\partial T_1'$ covers a component of $\partial \Sigma_3$ with degree
$\pm d$. 
	
Let $s_i^+, s_i^-$ be the positive and negative degrees by which
$g_1^{-1}(\Sigma_{3, i}) - T_3''$ covers $\Sigma_{3, i}$. Let $r^+$
and $r^-$ be the positive and negative degrees respectively of the
branched cover $g_1 \co g_1^{-1}(\Sigma_1) \to \Sigma_1$. By taking
multiple copies of $T_{3, i}'$ for all $i$ and multiple copies of
$T_3'$ and $T_1'$, and by orienting the copies of $T_{3, i}'$ and
$T_1'$ appropriately, we can find covers $h_3 \co T_3 \to T_3''$, 
$h_{3, i} \co T_{3, i} \to \Sigma_{3, i},$ and $h_1 \co T_1 \to
\Sigma_1$ such that, for some positive integer $N$,
\begin{itemize}
\item the positive (resp. negative) degree of $h_{3, i}$ is $N s_i^+$
(resp. $N s_i^-$),
\item $h_3$ has degree $N$,
\item $h_1$ has positive degree $N r^+$ and negative degree $N r^-$.
\end{itemize}
	
We now show that we can glue the above pieces together to get a taut
admissible surface $S_2$.  Note that every component of $\partial
\Sigma_{3, i}$ is covered by $\partial T_3''$ with positive and
negative degrees $r^+ - s_i^+$ and $r^- - s_i^-$ respectively.  Thus,
under $g_1 \circ h_3$, every component of $\partial \Sigma_{3, i}$ is
covered by $\frac{N(r^+ - s_i^+)}{d}$ curves by degree $d$ and
$\frac{N(r^- - s_i^-)}{d}$ curves by degree $-d$. Each component of
$\partial \Sigma_{3, i}$, under $h_{3, i}$, is covered by
$\frac{Ns_i^+}{d}$ curves by degree $d$ and $\frac{Ns_i^-}{d}$ curves
by degree $-d$. Each component of $\partial \Sigma_{3, i}$, under
$h_1$, is covered by $\frac{Nr^+}{d}$ curves by degree $-d$ and
$\frac{Nr^-}{d}$ curves by degree $d$. Thus, by gluing $T_1$ to $T_3
\cup \bigcup_i T_{3, i}$ along the preimage of $\partial \Sigma_3$
appropriately respecting orientations, we obtain an oriented compact
surface $S_2$ and a map $g_2 \co S_2 \to \Sigma$, which, by
construction, is a taut admissible map.
		
It is clear that $n(S_2) = N n(S_1)$, so it remains to compute the
Euler characteristic of various surfaces. 
\[\begin{array}{rcl} \chi(g_2^{-1}(\Sigma_3)) & = & \chi(T_3) + \sum_i
\chi(T_{3, i}) \\ 
& = & N \chi(T_3'') + \sum_i N(s_i^+ + s_i^-) \chi(\Sigma_{3, i}) \\
& = & N \chi(T_3'') + \sum_i N \chi(g_1^{-1}(\Sigma_{3, i}) - T_3'')
\\ 
& = & N \chi(g_1^{-1}(\Sigma_3)) \end{array}\]
\[\chi(g_2^{-1}(\Sigma_1)) =  N(r^+ + r^-) \chi(\Sigma_1)  
= N r \chi(\Sigma_1) \]
Finally, note that $\chi(g_1^{-1}(\Sigma_1)) \leq r \chi(\Sigma_1)$
since $g_1 \co g_1^{-1}(\Sigma_1) \to \Sigma_1$ is a branched cover,
and so
\[\chi(S_2) = \chi(g_2^{-1}(\Sigma_3)) + \chi(g_2^{-1}(\Sigma_1)) = N
\chi(g_1^{-1}(\Sigma_3)) + N r \chi(\Sigma_1) \geq N \chi(S_1).\]
\end{proof}

\begin{proof}[Proof of Proposition~\ref{tautprop}] 
Start with an admissible surface $f \co S \to \Sigma$. We may
assume that $f$ is incompressible, since performing a compression
reduces $-\chi(S)$ without changing $n(S)$ or the property of
being admissible. Now apply Lemmas \ref{tautlemma1}, \ref{tautlemma2},
and \ref{removebranchpoints}. 
\end{proof}

\section{Encoding surfaces}
Let $\TP$ denote the set of all taut turn paths. For each taut turn
path $p \in \TP$, we define the sides of $p$. Recall that $p\co S^1 \to
D$ is an immersed path which alternates between turn arcs and
immersed paths in $\partial D$. Since the intersection of $\gamma$
with $\Sigma_2$ lies in $M$, the turn path $p$ alternates between arcs from
$\{\tau_i\} \cup \{\alpha_i\} \cup \{\overline{\alpha}_i\}$ and arcs
in $\partial M - \Sigma_1$. A \textit{side} of $p$ is an ordered pair
$s \in (\{\tau_i\} \cup \{\alpha_i\{\cup\{\overline{\alpha}_i\})^2$
such that $p$ traverses the first arc in the ordered pair, then a part
of $\partial M - \Sigma_1$, and then the second arc in the ordered
pair. For any side $s$, we let $m_s \subseteq \partial M$ denote the
middle arc. See Figure \ref{sides}. Note that since $p$ is taut, it
traverses any side at most once. 

Let $\sides$ denote the set of all ordered pairs of arcs from
$\{\tau_i\} \cup \{\alpha_i\} \cup \{\overline{\alpha}_i\}$ which are
a side for some taut turn path $p$. Note that $\sides$ does not contain all
ordered pairs. E.g., $\sides$ cannot contain any pairs of the form
$(\alpha_i, \overline{\alpha}_j)$ since a taut turn path maps as an
immersion to $\partial D$. E.g., any pair $(\tau_i, \tau_j) \in \sides$
must have one arc oriented into $M$ and one oriented out from $M$ on
the same side of $M$. 

For any side $s \in \sides$, there is a unique side $\hat s \in \sides$ such
that $m_s, m_{\hat s}$ and two unique arcs from $(\gamma \cup \partial
\Sigma_1) \cap M$ bound a subrectangle of $M$. Call $\hat s$ the
\textit{dual side} of $s$, and let $M_s \subseteq M$ be the bounded
subrectangle. Note that, here, we are using the fact that $M$ contains
no self-intersections of $\gamma$. Also, note that $M_s = M_{\hat
s}$. See Figure \ref{sides} again. 

\begin{figure}[!ht]
\includegraphics{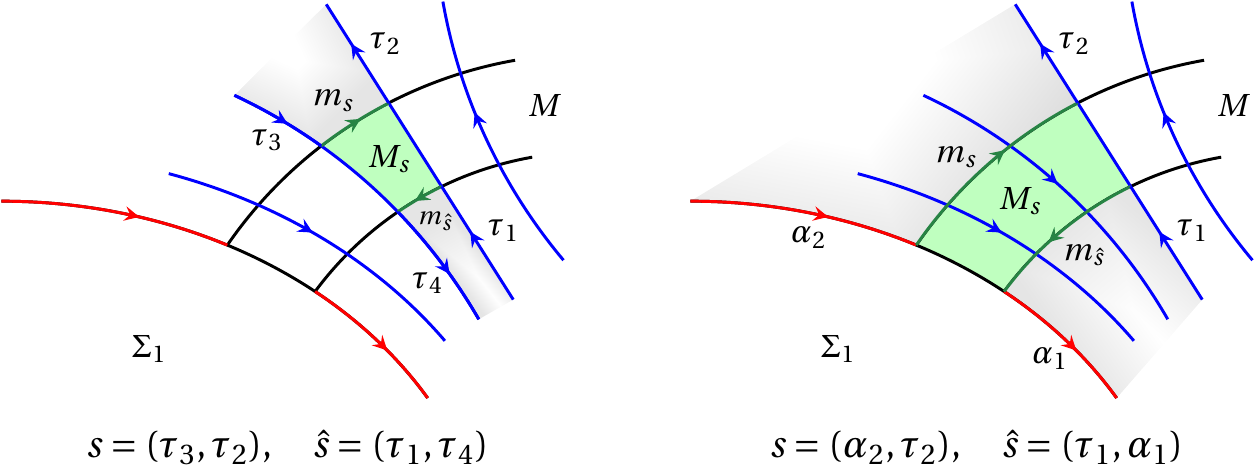}
\caption{Two examples of dual pairs of sides $(s,
\hat{s})$. Geometrically, each side encodes a portion of the
boundary of a possible turn disk (shaded gray in the
figure) mapping to $D$.}\label{sides} 
\end{figure}

\subsection*{The encoding}
Let $V = \R^{\TP} \oplus \R^2$ with coordinates $\{x_p\}, \{r^+,
r^-\}$.  We define a function
\[v\co \Ts(\gamma) \to V\]
as follows. Let $f\co S \to \Sigma$ be a taut admissible map. For each
$p \in \TP$, define $x_p(S)$ to be the number of components $E$ of
$f^{-1}(D)$ such that $f|_{\partial E} = p$. Let $S_1 =
f^{-1}(\Sigma_1)$. As mentioned above, since $f(\partial S) \cap
\Sigma_1 = \emptyset$, the map $f|_{S_1}\co S_1 \to \Sigma_1$ is a
covering map. Define $r^+(S)$ to be the positive degree of
$f\vert_{S_1}$ and $r^-(S)$ the negative degree. Finally let $v(S) =
((x_p(S)), r^+(S), r^-(S))$.

\subsection*{Matching equations}
The vector $v(S)$ will satisfy two kinds of equations. Broadly
speaking, for each turn path $p$ with a side $s$, there must be a
corresponding turn path $p'$ (possibly equal to $p$) with a matching
dual side $\hat s$. Moreover, $r^+$ and $r^-$ should match with the
number of $\alpha_i$'s and $\overline{\alpha}_i$'s appearing in turn
paths of $S$. We define some linear maps. For each side $s \in
\sides$, we define $d_s\co V \to \R$ as
\[d_s((x_p), r^+, r^-)  \ = \sum_{p \text{ has } s} x_p.\] 
For each $\alpha \in \{\alpha_i\} \cup \{\overline{\alpha}_i\}$,
define $d_{\alpha}\co V \to \R$ as
\[d_{\alpha}((x_p), r^+, r^-) \ = \sum_{\substack{\alpha \text{ is a}
    \\  \text{subarc of } p}} x_p.\] 
For each $\tau_i$, define $d_{\tau_i}\co V \to \R$ by 
\[d_{\tau_i}((x_p), r^+, r^-) \ =
\sum_{\substack{\tau_i \text{ is a} \\  \text{subarc of } p}}
x_p.\] 

\begin{lemma} \label{lemma:matchingequations}
Let $(f, S)$ be a taut admissible surface. Then, $((x_p), r^+, r^-) =
v(S)$ satisfies the following matching equations: 
\begin{enumerate}
\item $d_s((x_p), r^+, r^-) = d_{\hat s}((x_p), r^+, r^-)$ for all $s
  \in \sides$. 
\item $d_{\alpha_i}((x_p), r^+, r^-) = r^+$ for all $i$.
\item $d_{\overline{\alpha}_i}((x_p), r^+, r^-) = r^-$ for all $i$.
\item $d_{\tau_i}((x_p), r^+, r^-) = d_{\tau_j}((x_p), r^+, r^-)$ for
  all $i, j$. 
\end{enumerate}
\end{lemma}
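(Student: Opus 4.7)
The plan is to decompose $S$ into its natural pieces given by the taut structure and then read each matching equation off from how the pieces glue. Specifically, I would partition $S$ as $f^{-1}(\Sigma_1) \cup \bigl(\bigcup_s f^{-1}(M_s)\bigr) \cup f^{-1}(D)$. By tautness, the last piece is a disjoint union of turn disks $E$ with $f|_{\partial E}$ a taut turn path. By condition (t1) (immersion on $f^{-1}(\Sigma_2)$) together with compactness and the fact that $M_s$ is a disk, the map $f|_{f^{-1}(M_s)}$ is a proper local homeomorphism onto a simply connected base, so every component of $f^{-1}(M_s)$ is a disk mapping homeomorphically onto $M_s$. The same reasoning shows $f|_{f^{-1}(\Sigma_1)}\co f^{-1}(\Sigma_1) \to \Sigma_1$ is a covering map of total unsigned degree $r^+ + r^-$, with $r^+$ orientation-preserving sheets and $r^-$ orientation-reversing sheets.

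For the side equation $d_s = d_{\hat s}$, each rectangular component $R$ of $f^{-1}(M_s)$ contributes exactly one arc of $\partial R$ mapping to $m_s$ (glued to the boundary of some turn disk, where it realizes the side $s$) and exactly one arc mapping to $m_{\hat s}$ (realizing the dual side $\hat s$). Since $M_s = M_{\hat s}$, both $d_s(v(S))$ and $d_{\hat s}(v(S))$ equal the number of components of $f^{-1}(M_s)$. For the $\alpha_i$ equations, the arc $\alpha_i \subset \partial D \cap \partial \Sigma_1$ has $r^+ + r^-$ lifts in $\partial f^{-1}(\Sigma_1)$, one per sheet. Each lift is also an arc of the boundary of some turn disk $E$, and appears in the turn path $f|_{\partial E}$ either as $\alpha_i$ or as $\overline{\alpha}_i$. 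An orientation argument determines which: since $\Sigma$ is oriented and $D$ and $\Sigma_1$ lie on opposite sides of $\alpha_i$, the boundary orientation on $\alpha_i$ inherited from $\Sigma_1$ is the reverse of the one inherited from $D$; tracking this through an orientation-preserving sheet shows that such a sheet yields a lift that the adjacent turn disk traverses in the forward direction (as $\alpha_i$), and symmetrically for the $r^-$ reversing sheets. Summing yields $d_{\alpha_i}(v(S)) = r^+$ and $d_{\overline{\alpha}_i}(v(S)) = r^-$.

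For the turn-arc equation, I would show the stronger statement $d_{\tau_i}(v(S)) = n(S)$ for every $i$. The key observation is that any arc in the boundary of a turn disk $E$ which maps to $\tau_i$ must lie in $\partial S$: otherwise it would have turn disks on both sides, but distinct turn disks are distinct components of $f^{-1}(D)$ and hence cannot share a boundary arc. Thus every $\tau_i$-instance in a turn path corresponds to a component of $f^{-1}(\tau_i) \cap \partial S$, and by admissibility there are exactly $n(S)$ of these (the total degree of $\partial S$ over the $S^1$-component containing the preimage of $\tau_i$). The main obstacle I expect is the orientation bookkeeping in (2) and (3); the combinatorial matching of sides and turn-arc instances is straightforward once tautness is unpacked, but reconciling the conventions for $r^\pm$ with the forward/reverse traversal of $\alpha_i$ in a turn path demands careful attention to the induced boundary orientations on both sides of each lifted arc.
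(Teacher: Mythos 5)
Your proof is correct and follows essentially the same route as the paper's: decompose $S$ into the covering $f^{-1}(\Sigma_1)\to\Sigma_1$, rectangles over $M$, and turn disks over $D$, then obtain (1) by matching the two $\partial D$--edges of each rectangle, (2)--(3) by counting oriented lifts of $\alpha_i$, and (4) via the identity $d_{\tau_i}(v(S))=n(S)$. The only differences are cosmetic: the paper organizes the side-matching around components of $f^{-1}(M)$ rather than of $f^{-1}(M_s)$, and is terser about the orientation bookkeeping and about why boundary arcs of turn disks mapping to turn arcs must lie in $\partial S$.
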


\begin{proof}
Let $M'$ be a component of $M$ and let $N'$ be a component of $N =
f^{-1}(M)$ mapping to $M'$. By tautness, the map $f\co N' \to M'$ is
an immersion. Since $M'$ is simply connected, $N'$ is compact, and
$\partial S$ maps exclusively to $\gamma$, the surface $N'$ is a 
topological rectangle with two edges mapping to $\partial D$ and two
edges mapping to $(\gamma \cup \partial \Sigma_1) \cap M$. The map
$f\vert_{N'}$ may be orientation reversing or preserving. 
	
Let $E', E''$ be the two components of $f^{-1}(D)$ bordering
$N'$. Then $N'$ borders $E', E''$ of $E$ in the middle of sides in
$f|_{\partial E'}, f|_{\partial E''}$. Since $f$ is an immersion from
$f^{-1}(\Sigma_2)$ to $\Sigma_2$ and from $\partial S$ to $\Sigma$ (as
a $1$-manifold), the map $f|_{\partial E'}$ (resp. $f|_{\partial
  E''}$) is determined near the intersection of $\partial N'$ and
$\partial E'$ (resp. $\partial N'$ and $\partial E''$). Thus,
$f|_{N'}$ determines precisely one side each of the turn paths
$f|_{\partial E'}, f|_{\partial E''}$ and those sides must be
dual. Conversely, the side of $f|_{\partial E'}$ at $\partial E'
\cap \partial N'$ determines the image of $f(N')$ and the side of
$f|_{\partial E''}$ at $\partial E'' \cap \partial N'$ which must be
dual. Since $f|_{\partial E}$ is taut for any component $E$ of
$f^{-1}(D)$, each arc $\tau_i, \alpha_j, \overline{\alpha}_j$ is
traversed at most once and thus each side appears in $f|_{\partial E}$
at most once. Thus, $d_s(v(S))$ is the total number of times $s$
appears as a side in $f$, and the above argument shows this is equal
to the total number of times $\hat s$ appears as a side in $f$ which
is $d_{\hat s}(v(S))$. 

For the second equality, first note that $r^+(S)$ equals the positive
degree of the map $f \co f^{-1}(\alpha_i) \to \alpha_i$ where we view
$f^{-1}(\alpha_i)$ as a subarc of the boundary of
$f^{-1}(\Sigma_3)$. This is equal to the number of times that the
boundary of $f^{-1}(D)$ traverses $\alpha_i$, and since $f$ is taut,
this is $d_{\alpha_i}(v(S))$. The argument for the third equation is
similar. Again by tautness, $d_{\tau_i}(v(S)) = n(S)$ for all $i$, and
the last equality holds.
\end{proof}

\subsection*{Degree and Euler characteristic}
Recall from the preceding proof that 
$d_{\tau_i}(v(S)) \ = \ n(S)$
for each turn arc $\tau_i$ and all taut admissible surfaces $S$. Thus
we have the factorization 
\[ \Ts(\gamma) \overset{v}{\to} V \overset{d_{\tau_i}}\to \R\]
of the degree function $n(S)$ through $v$ via any of the linear maps
$d_{\tau_i}$. 

The Euler characteristic $\chi(S)$ also factors through $v$ in a
similar fashion. First we need a formula for
$\chi(S)$ based on the combinatorics of turn disks. 

\begin{definition}
For $p \in \TP$ let
\[ \kappa(p) \ = \ 1 - \frac{1}{2}(\# \text{ sides of } p).\]
Let $F_\chi \co \R^{\TP} \oplus \R^2 \to \R$ be the linear map 
\[F_\chi ((x_p), r^+, r^-) \ = \ \Bigl(\sum_{p \in \TP} \kappa(p)
x_p\Bigr) \ + \ \chi(\Sigma_1)(r^+  +  r^-). \]
\end{definition}

\begin{lemma} \label{chiislin}
If $S$ is a taut admissible surface, then  $\chi(S) = F_\chi(v(S))$.
\end{lemma}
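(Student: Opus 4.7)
The plan is to decompose $\chi(S)$ additively using the preimage decomposition of $S$ under $f$. The tool is the standard formula $\chi(A \cup B) = \chi(A) + \chi(B) - \chi(A \cap B)$, valid for closed subsurfaces of $S$ whose intersection is a $1$-submanifold.

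First I would split $S$ as $f^{-1}(\Sigma_1) \cup f^{-1}(\Sigma_3)$. By tautness, $f$ restricted to $f^{-1}(\Sigma_2)$ is an immersion, and since $\partial S$ maps into $\gamma$---which is disjoint from $\Sigma_1$---the restriction $f\vert_{f^{-1}(\Sigma_1)}$ is a proper local homeomorphism onto $\Sigma_1$, hence a covering map whose positive and negative degrees are by definition $r^+ = r^+(S)$ and $r^- = r^-(S)$. The intersection
\[ f^{-1}(\Sigma_1) \cap f^{-1}(\Sigma_3) \;=\; f^{-1}(\partial \Sigma_1) \]
is therefore a disjoint union of circles, contributing $\chi = 0$. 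This gives
\[ \chi(S) \;=\; (r^+ + r^-)\,\chi(\Sigma_1) \;+\; \chi\bigl(f^{-1}(\Sigma_3)\bigr). \]

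Next I would compute $\chi(f^{-1}(\Sigma_3))$ by further decomposing into $Q = f^{-1}(M)$ and $R = f^{-1}(D)$. By condition \eqref{t2} of Definition \ref{tautdef}, $R$ is a disjoint union of turn disks, one for each boundary occurrence of a taut turn path, so $\chi(R) = \sum_{p \in \TP} x_p(S)$. As explained in the proof of Lemma \ref{lemma:matchingequations}, each component of $Q$ is a rectangle mapped by immersion to $M$, and its two sides on $\partial D$ correspond to a dual pair $\{s, \hat s\}$. Thus the rectangles are in bijection with the unordered dual pairs realized by the turn-disk boundaries, giving $\chi(Q) = \tfrac{1}{2}\sum_p n_s(p)\, x_p(S)$, where $n_s(p)$ denotes the number of sides of $p$. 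The intersection $Q \cap R = f^{-1}(\partial D \cap \partial M)$ is a disjoint union of arcs, one for each individual side occurrence on the boundary of a turn disk, so $\chi(Q \cap R) = \sum_p n_s(p)\, x_p(S)$. Combining these,
\[ \chi\bigl(f^{-1}(\Sigma_3)\bigr) \;=\; \sum_p x_p(S) + \tfrac{1}{2}\sum_p n_s(p)\, x_p(S) - \sum_p n_s(p)\, x_p(S) \;=\; \sum_p \kappa(p)\, x_p(S). \]

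Adding the two pieces yields exactly $F_\chi(v(S))$, proving the lemma. The main obstacle is the bookkeeping in the middle step: correctly identifying components of $Q$ with \emph{dual pairs} of sides (accounting for the factor of $\tfrac{1}{2}$), and the arcs of $Q \cap R$ with the \emph{individual} side occurrences. Both facts are essentially contained in the proof of Lemma \ref{lemma:matchingequations}, where each component of $f^{-1}(M)$ is shown to be a rectangle meeting $\partial D$ precisely along two arcs that realize a dual pair of sides.
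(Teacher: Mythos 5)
Your proof is correct and follows essentially the same route as the paper's: decompose $S$ into $f^{-1}(\Sigma_1)$ (a covering of $\Sigma_1$ of total degree $r^+ + r^-$) and $f^{-1}(\Sigma_3)$ (turn disks joined by rectangles over $M$), and add the Euler characteristics. The only difference is cosmetic---where the paper counts each rectangle as a $1$-handle attached along a dual pair of sides and asserts the formula for $\chi(f^{-1}(\Sigma_3))$ directly, you make the same count explicit via inclusion--exclusion over $Q = f^{-1}(M)$, $R = f^{-1}(D)$, and the arcs of $Q \cap R$.
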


\begin{proof}
Given a taut admissible surface $f\co S \to \Sigma$, the subsurface
$S_3 = f^{-1}(\Sigma_2 - \interior(\Sigma_1))$ consists of turn disks 
mapping to $D$ and $1$-handles mapping by immersions to $M$ where each
$1$-handle attaches to the middle arcs of a dual pair of sides. It is
clear then that  
\[ \chi(S) \ = \ \Bigl(\sum_{p \in \TP} \kappa(p) x_p(S)\Bigr).\]
Let $S_1 = f^{-1}(\Sigma_1)$. By definition of taut surface, $S_1$
maps to $\Sigma_1$ by an immersion and since $S_1 \cap \partial S =
\emptyset$, the map is a covering map. Therefore $\chi(S_1) =
\chi(\Sigma_1)(r^+(S)  +  r^-(S))$. Since $S = S_1 \cup S_3$ and $S_1,
S_3$ meet along full boundary components of $S_1$ and $S_3$, we have
$\chi(S) = \chi(S_1) + \chi(S_3)$ and the lemma follows.
\end{proof}
	
\begin{remark}
Having explained our encoding of surfaces, we can relate this encoding
to that used to determine stable commutator length in a free
group. Our $1$--chain $\gamma$ lies in the union of the $D_i$ and
bands $M$. If we collapse each $D_i$ to a point and the bands in $M$
to edges, we obtain a $1$--chain in a graph. However, only the turn
paths which traverse no $\alpha_i$ correspond to turn circuits as
defined in \cite{BCF}, and of course, there is nothing analogous to
the role of $\Sigma_1$ and its matching equations with turn paths. 
\end{remark}

\section{Reassembly}
Given a taut admissible surface, we associate the vector $v(S)$ which
encodes it and satisfies matching equations. Now, we would like to
show that we can construct a taut admissible surface with the right
Euler characteristic and degree ($n(S)$) from a vector satisfying the
matching equations. In general, it's not clear that this is possible
for arbitrary positive integral vectors satisfying the matching
equations. However, it is possible after multiplying the vector by a
sufficiently large integer. In fact, we make the statement for
positive rational vectors. 

\begin{lemma} \label{reconstruction}
Suppose a nonzero, nonnegative vector $((x_p), r^+, r^-) \in \Q^{\TP}
\oplus \Q^2$ satisfies the matching equations (1)-(4) in Lemma
\ref{lemma:matchingequations}. Then, there is a taut admissible
surface $S$ such that $$ \frac{\chi(S)}{n(S)} \ =
\ \frac{F_\chi((x_p), r^+, r^-)}{d_{\tau_1}((x_p), r^+, r^-)}.$$ 
\end{lemma}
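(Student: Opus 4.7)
The plan is to construct a (not necessarily taut) intermediate surface $S'$ that realizes the given data up to allowing branch points over $\Sigma_1$, then apply Lemma~\ref{removebranchpoints} to obtain a taut surface $S$ with the required ratio. First I clear denominators: after multiplying $((x_p), r^+, r^-)$ by a positive integer $m$, both $F_\chi$ and $d_{\tau_1}$ are scaled by $m$ so the target ratio is unchanged, and I may assume the vector lies in $\N^{\TP} \oplus \N^2$.

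To build the part $S_3'$ of $S'$ mapping to $\Sigma_3 = D \cup M$, for each $p \in \TP$ I take $x_p$ oriented copies of a standard turn disk whose boundary maps to $D$ by $p$. For each dual pair $\{s, \hat s\}$, matching equation (1) gives $d_s = d_{\hat s}$, so I can fix a bijection between the $d_s$ copies of the $s$-side and the $d_{\hat s}$ copies of the $\hat s$-side appearing across all disks. For each matched pair I glue in one copy of the rectangle $M_s = M_{\hat s}$ along the middle arcs $m_s$ and $m_{\hat s}$. The resulting $S_3'$ is an oriented surface mapping to $\Sigma_3$ by an immersion, and a direct count gives
\[
\chi(S_3') \ = \ \sum_p x_p \ - \ \frac{1}{2}\sum_p (\#\text{sides of } p)\, x_p \ = \ \sum_p \kappa(p)\, x_p.
\]

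Next I construct a branched cover $S_1' \to \Sigma_1$ and glue to complete $S'$. The portion $\partial S_3' \cap \partial \Sigma_1$ is a collection of oriented circles in $\partial \Sigma_1$; matching equations (2)--(3) guarantee that each arc $\alpha_i$ is traversed $r^+$ times positively and $r^-$ times negatively, and by choosing the orientation of each copy of $M_s$ appropriately I arrange the analogous balance on the remaining arcs of $\partial \Sigma_1$ (the short sides of $M$ on $\partial \Sigma_1$). Applying Lemma~\ref{coverlem3} componentwise on $\Sigma_1$, I build an $(r^+ + r^-)$-sheeted branched cover $S_1'$ whose boundary partitions match the circles of $\partial S_3'$ on each component of $\partial \Sigma_1$, oriented so that $r^+$ sheets are orientation-preserving and $r^-$ are orientation-reversing. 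Gluing $S_1'$ to $S_3'$ yields an oriented admissible surface $S'$ satisfying the hypotheses of Lemma~\ref{removebranchpoints}, with $n(S') = d_{\tau_1}((x_p), r^+, r^-)$.

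Finally, Lemma~\ref{removebranchpoints} applied to $S'$ produces a taut admissible $S$ with $n(S) = N \cdot d_{\tau_1}(v)$ and, using (TT2),
\[
\chi(S) \ = \ N\chi(S_3') + N(r^++r^-)\chi(\Sigma_1) \ = \ N\Bigl(\sum_p \kappa(p) x_p + (r^++r^-)\chi(\Sigma_1)\Bigr) \ = \ N F_\chi(v),
\]
so $\chi(S)/n(S) = F_\chi(v)/d_{\tau_1}(v)$ as claimed. The main obstacle I anticipate is the third step: verifying that the oriented boundary circles of $S_3'$ on $\partial \Sigma_1$ do assemble into a valid boundary pattern for some branched cover of $\Sigma_1$ with the prescribed positive and negative sheet counts. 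This requires carefully exploiting the orientation freedom on the $M_s$ copies to make the traversal counts on $\partial M$-arcs in $\partial \Sigma_1$ (which are not directly controlled by matching equations (2)--(3)) compatible with the hypothesis of Lemma~\ref{coverlem3} on each component of $\Sigma_1$.
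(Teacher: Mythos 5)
Your proposal follows essentially the same route as the paper: build $S_3$ from $x_p$ turn disks glued along rectangles $N_s$ using matching equation (1), cap off with a branched cover of $\Sigma_1$ supplied by Lemma \ref{coverlem3}, and then invoke Lemma \ref{removebranchpoints} to eliminate branch points, with identical Euler characteristic bookkeeping. The obstacle you flag is dispatched exactly as in the paper: the part of $\partial S_3$ lying over a component $c$ of $\partial\Sigma_1$ maps by a local homeomorphism and $c$ is connected, so the positive and negative degrees over $c$ can be read off at a point of any arc $\alpha_i \subset c$, where matching equations (2)--(3) give $r^+$ and $r^-$.
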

\begin{proof}
We can, without loss of generality, assume that the vector is
integral. The basic idea is as follows. We take $x_p$ turn disks for
each $p$ whose boundary will map to the turn path $p$. We then attach
$1$-handles to dual sides. This surface, call it say $S_3$, then has
boundary components some of which map to powers of components of
$\gamma$ and some to components of $\partial \Sigma_3$ with varying
degrees. We then want to glue in a (probably disconnected) surface to
latter boundary components which will map to $\Sigma_1$ by a covering
map. The difficulty is that the part of $\partial S_3$ mapping to
$\partial \Sigma_3 = \partial \Sigma_1$ may not extend to a covering
map of $\Sigma_1$ depending on the topology of $\Sigma_1$ and the
degrees of the maps from components of $\partial S_3$ to components of
$\partial \Sigma_1$. Lemmas \ref{coverlem3} and
\ref{removebranchpoints} will fix this problem at the cost of scaling
$((x_p), r^+, r^-)$ by some positive integer. 
	
For each turn path $p \in \TP$, we let $E_p$ be an oriented disk and
$f_p\co E_p \to D$ a map such that $f_{\partial E_p} = p$ (where
$\partial E_p$ has its boundary orientation). If $p$ does not traverse
any turn arcs, then $p\co S^1 \to \partial D$ is an immersion; in this
case, we insist $f_p$ is a homeomorphism. Note that if $p$ traverses
$\overline{\alpha}_i$'s, then $f_p$ is orientation-reversing. For each
side $s \in \sides$, we let $N_s$ be an oriented disk which is a copy
of $M_s$ and $f_s \co N_s \to M_s$ the canonical homeomorphism. We
orient $N_s$ as follows. If $s$ has a turn $\tau_i$, then we orient
$N_s$ so that the boundary orientation is consistent with the
orientation of $\tau_i$. If $s$ has an $\alpha_i$, then we orient
$N_s$ such that $f_s$ is orientation-preserving and if $s$ has an
$\overline{\alpha}_i$, then we orient $N_s$ such that $f_s$ is
orientation-reversing. 

Now, we construct our initial surface. We take $x_p$ copies of $E_p$
and $d_s((x_p), r^+, r^-)$ copies of $N_t$. (Recall that $M_s =
M_{\hat s}$ and thus $N_s = N_{\hat s}$. By the matching equations, we
are taking a well-defined number of copies of $N_s$.) We attach the
$E_p$ together using the $N_s$ in the obvious way such that $f_p$ and
$f_s$ extend to a map on the oriented surface constructed. Call this
surface $S_3$ and the map $f_3$. Some boundary components of $S_3$ map
to $\gamma$ and others to $\partial \Sigma_3 = \partial \Sigma_1$. By
the matching equations, the positive degree of the map $\partial S_3$
to any component of $\partial \Sigma_3$ is $r^+$ and the negative
degree is $r^-$. By applying Lemma \ref{coverlem3} separately to the
positive and negative degrees, there is a branched cover $S_1 \to
\Sigma_1$ such that the degree partition over any component of
$\partial \Sigma_3$ is precisely the negative of that of $S_3$. By
gluing boundary components of $S_3$ to $S_1$ which map to the same
component of $\partial \Sigma_3$ with matching degrees (i.e. equal but
opposite), we obtain a surface $S'$ and a map $g' \co S' \to \Sigma$
satisfying the hypotheses of Lemma \ref{tautlemma2}.  
	
By Lemma \ref{removebranchpoints}, there is a positive integer $N$ and
a taut admissible surface $g \co S \to \Sigma$ satisfying $n(S) = N
n(S') = N d_{\tau_1}((x_p), r^+, r^-)$ and $\chi(g^{-1}(\Sigma_3)) = N
\chi(S_3)$ and $\chi(g^{-1}(\Sigma_1)) = N(r^+ + r^-)
\chi(\Sigma_1)$. We then have $$\chi(S) =  \chi(g^{-1}(\Sigma_3)) +
\chi(g^{-1}(\Sigma_1)) = N \chi(S_3) + N(r^+ + r^-) \chi(\Sigma_1) = N
F_\chi((x_p), r^+, r^-). $$ The lemma follows.
\end{proof}

\section{Linear programming and scl}
We are now ready to prove that the stable commutator length of the
nonfilling $1$-chain is the solution of a linear program. We define
the region $R \subseteq \R^{\TP} \oplus \R^2$ to be the subset defined
by the following linear equations and inequalities. 
\begin{itemize}
\item the matching equations from Lemma \ref{lemma:matchingequations}
\item $d_{\tau_1}((x_p), r^+, r^-) = 1$
\item $x_p \geq 0$ for all $p \in \TP$ and $r^+ \geq 0$ and $r^- \geq 0$
\end{itemize}

\begin{lemma} \label{sclislinprog}
The stable commutator length of $\gamma$ is equal to 
\begin{equation} \label{linproginf}
\inf\left\{-\frac{1}{2}F_\chi((x_p), r^+, r^-) \mid ((x_p), r^+, r^-)
\in \Q^{\TP} \oplus \Q^2 \cap R \right\}  
\end{equation}
\end{lemma}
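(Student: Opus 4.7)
The plan is to establish Lemma \ref{sclislinprog} by proving two inequalities between $\scl(\gamma)$ and the linear programming infimum in \eqref{linproginf}, using the encoding function $v$ and the reassembly procedure as mutually inverse passages between taut admissible surfaces and rational points of $R$.

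For the inequality $\scl(\gamma) \geq \inf \{-\tfrac{1}{2}F_\chi((x_p), r^+, r^-)\}$, I would start from Corollary \ref{tautcor}, which lets me restrict the infimum defining $\scl(\gamma)$ to taut admissible surfaces $S \in \Ts(\gamma)$. Given such an $S$, consider the rescaled vector $\tfrac{1}{n(S)}\, v(S) \in \Q^{\TP} \oplus \Q^2$. The matching equations are linear homogeneous, so Lemma \ref{lemma:matchingequations} implies this vector still satisfies them; its entries are nonnegative since the entries of $v(S)$ are; and since $d_{\tau_1}(v(S)) = n(S)$ (as noted just before Lemma \ref{chiislin}), the normalization gives $d_{\tau_1}(\tfrac{1}{n(S)}\,v(S)) = 1$. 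Hence $\tfrac{1}{n(S)}\,v(S)$ lies in $\Q^{\TP} \oplus \Q^2 \cap R$. By Lemma \ref{chiislin} together with linearity of $F_\chi$,
\[ -\tfrac{1}{2}F_\chi\bigl(\tfrac{1}{n(S)}\,v(S)\bigr) \ = \ \frac{-\chi(S)}{2n(S)}, \]
so every value of the ratio $-\chi(S)/(2n(S))$ is attained by a feasible point of the linear program, giving the desired inequality.

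For the reverse inequality, I would start with an arbitrary nonzero nonnegative rational vector $((x_p), r^+, r^-) \in R$. Since it satisfies the matching equations, Lemma \ref{reconstruction} produces a taut admissible surface $S$ with
\[ \frac{\chi(S)}{n(S)} \ = \ \frac{F_\chi((x_p), r^+, r^-)}{d_{\tau_1}((x_p), r^+, r^-)} \ = \ F_\chi((x_p), r^+, r^-), \]
the last equality using the defining constraint $d_{\tau_1} = 1$ in $R$. Thus $-\chi(S)/(2n(S)) = -\tfrac{1}{2}F_\chi((x_p), r^+, r^-)$, and because $\scl(\gamma) \leq -\chi(S)/(2n(S))$ by definition of scl, we get $\scl(\gamma) \leq -\tfrac{1}{2}F_\chi((x_p), r^+, r^-)$ for every feasible rational point. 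Taking the infimum yields the second inequality.

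I do not expect any real obstacle here: the two main technical steps, namely encoding (Lemma \ref{lemma:matchingequations} plus Lemma \ref{chiislin}) and reassembly (Lemma \ref{reconstruction}), are precisely the two directions of a bijective-enough correspondence between $\Ts(\gamma)$ and rational feasible points, and the Euler characteristic/degree factor through $v$ as $F_\chi$ and $d_{\tau_1}$. The only small care needed is to observe that if $\Ts(\gamma)$ is nonempty (which holds whenever $\scl(\gamma) < \infty$) then the normalized vectors $\tfrac{1}{n(S)}v(S)$ are nonzero rational points of $R$, so both the supply of feasible vectors for the LP and the applicability of Lemma \ref{reconstruction} (which requires a nonzero vector) are automatic; and the constraint $d_{\tau_1} = 1$ makes the normalization in Lemma \ref{reconstruction} disappear cleanly.
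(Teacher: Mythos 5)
Your proposal is correct and follows essentially the same two-inequality argument as the paper: normalize $v(S)$ by $n(S)$ to land in $R$ via Corollary \ref{tautcor}, Lemma \ref{lemma:matchingequations}, and Lemma \ref{chiislin} for one direction, and invoke Lemma \ref{reconstruction} with the constraint $d_{\tau_1}=1$ for the other. The extra remark about nonemptiness and nonzero vectors is a harmless addition the paper handles later in the proof of Theorem \ref{mainthm}.
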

\begin{proof}
Recall from Corollary~\ref{tautcor} that 
\[\scl(\gamma) \ = \ \inf_{S \in \Ts(\gamma)}
  \frac{-\chi(S)}{2n(S)}.\] 
Let $S$ be a taut admissible surface. Then, by Lemma
\ref{lemma:matchingequations}, $v(S) \in \Z^{\TP}\oplus \Z^2$
satisfies the matching equations. Moreover, $n(S) = d_{\tau_1}(v(S))$
and, by Lemma \ref{chiislin}, $\chi(S) = F_\chi(v(S))$. By linearity,
$d_{\tau_1}\left(\frac{v(S)}{n(S)}\right) = 1$
and $$-\frac{\chi(S)}{2n(S)} = -\frac{1}{2} \frac{F_\chi(v(S))}{n(S)}
= -\frac{1}{2} F_\chi\left(\frac{v(S)}{n(S)}\right).$$ Thus,
$\scl(\gamma) \geq \eqref{linproginf}$. On the other hand, by Lemma
\ref{reconstruction}, for any vector $((x_p), r^+, r^-)$ satisfying
the matching equations and $d_{\tau_1}((x_p), r^+, r^-) = 1$, there is
a taut admissible surface $S$ such that $$- \frac{\chi(S)}{2n(S)} =
-\frac{1}{2} \frac{F_\chi((x_p), r^+, r^-)}{d_{\tau_1}((x_p), r^+,
  r^-)} = -\frac{1}{2} F_\chi((x_p), r^+, r^-). $$ Thus
$\eqref{linproginf} \geq \scl(\gamma)$. 
\end{proof}

We are now ready to prove the main theorem.

\begin{proof}[Proof of Theorem \ref{mainthm}]
Since $v(S) \in R$ for any taut admissible surface $S$, the region $R$
is non-empty. Since $R$ is defined by equations and inequalities with
rational coefficients, $R \cap \Q^{\TP}\oplus \Q^2$ is dense in
$R$. By Lemma \ref{sclislinprog} and the fact that stable commutator
length is nonnegative, $-\frac{1}{2} F_\chi$ is bounded below on
$R$. Since $R$ is finite-dimensional and defined by finitely many
equations, minimizing the linear function $-\frac{1}{2} F_\chi$ on $R$
is a linear program and therefore has an optimal solution at a vertex
of $R$. Since $R$ is defined by equations with rational coefficients,
all vertices are rational points, and so by Lemma \ref{sclislinprog},
$\scl(\gamma)$ is rational.
\end{proof}

\begin{remark}
The proofs of Theorem~\ref{mainthm} and Lemma \ref{reconstruction}
also establish that $\gamma$ has an extremal surface. 
\end{remark}

\bibliographystyle{amsalpha}
\bibliography{SCL}

\end{document}